\documentclass{amsart}
\usepackage[english]{babel}
\usepackage{enumerate, tabularx, enumitem,booktabs, amsthm,amsmath,amsfonts,amssymb}
\usepackage{hyperref}

\newtheorem{thm}{Theorem}[section]
\newtheorem{thmintro}{Theorem}
\newtheorem{lemma}[thm]{Lemma}
\newtheorem{corol}[thm]{Corollary}
\newtheorem{prop}[thm]{Proposition}
\newtheorem{propintro}[thmintro]{Proposition}

\theoremstyle{definition}
\newtheorem{defi}[thm]{Definition}
\newtheorem{defintro}[thmintro]{Definition}
\theoremstyle{remark}
\newtheorem{example}[thm]{Example}

\newtheorem{rem}[thm]{Remark}

\newcommand{\C}{\mathbb C}
\newcommand{\R}{\mathbb R}
\newcommand{\Z}{\mathbb Z}
\newcommand{\Q}{\mathbb Q}
\newcommand{\CP}{\mathbb{CP}}
\newcommand{\RP}{\mathbb{RP}}

\newcommand{\lie}{\mathfrak}

\DeclareMathOperator{\Aut}{Aut}
\DeclareMathOperator{\End}{End}
\DeclareMathOperator{\Span}{span}
\DeclareMathOperator{\GL}{GL}
\DeclareMathOperator{\SU}{SU}
\DeclareMathOperator{\U}{U}

\DeclareMathOperator{\SO}{SO}

\DeclareMathOperator{\G}{G}
\DeclareMathOperator{\D}{D}
\DeclareMathOperator{\E}{E}

\DeclareMathOperator{\Ad}{Ad}
\DeclareMathOperator{\id}{id}
\DeclareMathOperator{\Sym}{Sym}
\renewcommand{\phi}{\varphi}
\renewcommand{\epsilon}{\varepsilon}
\renewcommand{\hat}{\widehat}
\newcommand{\call}{\mathcal}

\DeclareMathOperator{\Spec}{Spec}

\DeclareMathOperator{\res}{res}

\numberwithin{equation}{section}

\title[Laplacian irreducibility of some homogeneous spaces]{Irreducibility of the Laplacian eigenspaces of some homogeneous spaces}
\author{David Petrecca \and Markus R\"oser}
\address{Institut f\"ur Differentialgeometrie, Leibniz Universit\"at Hannover
 \endgraf  Welfengarten 1, 30167
- Hannover - Germany}
\email{petrecca at math.uni-hannover.de, roeser at math.uni-hannover.de}
\subjclass[2010]{53C30 (Primary) -- 58J50, 22E46 (Secondary)}
\keywords{homogeneous spaces, symmetric spaces, laplacian, spherical representations}

\begin{document}
\begin{abstract}
	For a compact homogeneous space $G/K$, we study the problem of existence of $G$-invariant Riemannian metrics such that each eigenspace of the Laplacian is a real irreducible representation of $G$. 
	
	We prove that the normal metric of a compact irreducible symmetric space has this property only in rank one. Furthermore, we provide  existence results for such metrics on certain isotropy reducible spaces.
\end{abstract}
\maketitle
\tableofcontents
\section*{Introduction}
Let $(M, g)$ be a compact Riemannian manifold and let $\Delta_g$ be the Laplace operator of $g$ acting on smooth functions.
It is an elliptic, self-adjoint and non-negative operator, so its non-zero eigenvalues $\lambda_i$ are positive and the corresponding  eigenspaces $E_{\lambda_i}$ are finite-dimensional. By definition we have $E_0 = \ker\Delta_g$, which, if $M$ is connected, consists of only the constant functions.

The increasing sequence of real numbers
\[ 
0 = \lambda_0 < \lambda_1 \leq \lambda_2 \leq \ldots \nearrow +\infty,
\]
where each $\lambda_i$ is repeated according to its multiplicity $\dim E_{\lambda_i}$, is called the \emph{spectrum} of $(M,g)$ and denoted by $\Spec(M,g)$.

A metric such that $\dim E_{\lambda_i}=1$, i.e. all eigenspaces are of minimal possible dimension, is called \emph{simple}. If $\call M$ is the space of smooth Riemannian metrics on $M$, Uhlenbeck \cite{Uhlenbeck} proved that the set of simple metrics is residual in $\call M$ with respect to an appropriate topology. We recall that a subset of a topological space is called residual if it is the intersection of countably many sets with dense interior.

Recently, Guillemin, Legendre and Sena-Dias \cite{GuilleminLegendreSenaDias} computed the variation of the Rayleigh quotients within a conformal class, reproving the result of Uhlenbeck stating that every Riemannian metric can be conformally deformed into a metric with simple spectrum.

The notion of simplicity can be generalized in the following way in presence of symmetries. Assume that $(M, g)$ is acted on by a group $G$ of isometries.

The extended action of $G$ on $C^\infty(M,\C)$ given by $(g \cdot f)(x) = f(g^{-1}\cdot x)$ commutes with the Laplacian, so it defines a $G$-representation on each eigenspace $E_\lambda$ which is real in the sense that it preserves the real subspace $C^\infty(M,\R)\cap E_\lambda$. In this context, the spectrum of $\Delta$ will of course not be simple. An analogous minimality property of the eigenspaces is to be irreducible as $G$-representations.

This motivates the following definition.
\begin{defintro}
	A compact Riemannian manifold $(M, g)$ with an isometric action of a group $G$ is said to have a \emph{complex $G$-simple spectrum} if, for every eigenvalue $\lambda\in\Spec(M,g)$, the complex $G$-representation $E_\lambda\subset C^\infty(M, \C)$ is irreducible. If the real $G$-representations $E_\lambda\cap C^\infty(M, \R)$ are all irreducible, we say that $(M,g)$ has \emph{$G$-simple spectrum}, or is \emph{$G$-simple} for short. 
\end{defintro}

It is clear that this property is invariant under homothetic variations of the metric.

This notion first appeared in the monograph \cite{spectre} by Berger, Gauduchon and Mazet. Recently, Schueth \cite{schueth} studied the problem for $M=G$ with a left-invariant metric and $G$ acting by left-translations. She established a number of genericity results for $G$ given by a product of a torus and copies of $\SU(2)$ and quotients of such groups by discrete central subgroups.

In this work, we focus on $G$-invariant metrics on homogeneous spaces $M = G/K$, where $G$ is a compact connected Lie group and $K$ a closed subgroup.  On such spaces, $G$-invariant Riemannian metrics correspond to $\Ad(K)$-invariant inner products on the Lie algebra $\lie g$ of $G$, see e.g. \cite[Chap.~X]{KN}.

Fixing a $\mathrm{Ad}(G)$-invariant inner product on $\lie g$, we get a $K$-invariant decomposition $\lie g = \lie k \oplus \lie m$, where $\lie m = \lie k^\perp$ is called the isotropy representation. 
If $\lie m$ is an irreducible representation of $K$, the space $M$ is called isotropy irreducible, otherwise it is isotropy reducible. The space of left-invariant metrics on $G$ can then be identified with the space $\Sym_K^+(\lie m)$ of $\Ad(K)$-invariant inner products on $\lie m$. If $\lie m$ decomposes into pairwise inequivalent irreducible representations $\lie m_i$, $i=1,\dots,s$, then $\Sym_K^+(\lie m)$ may be identified with the positive orthant in $\R^s$, which we denote by $\R^s_+$. Otherwise, $\Sym_K⁺(\lie m)$ is more complicated since equivalent submodules need not be orthogonal for a $K$-invariant inner product.

The Peter-Weyl theorem describes the decomposition of the $G$-representation $C^\infty(G/K,\C)$ into isotypical components. The representations that occur are exactly the \emph{spherical representations} of the pair $(G,K)$, namely the irreducible complex $G$-representations containing a nonzero subspace of vectors fixed by $K$. The action of the Laplacian on each isotypical component can be described in purely Lie-algebraic terms. 

Adapting Schueth's arguments to our setting, we can give a Lie-algebraic characterization of $G$-simplicity and prove the following result, which means that if a $G$-simple metric on $G/K$ exists, then the generic left-invariant metric on $G/K$ has this property.

\begin{propintro}[see Prop.~\ref{prop:resultants}]
	Let $M=G/K$ be a homogeneous space with isotropy representation $\lie m$. If the space of $G$-simple Riemannian metrics on $G/K$ is non-empty, then it is a residual set in $\Sym_K^+(\lie m)$.
\end{propintro}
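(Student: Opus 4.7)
The plan is to translate $G$-simplicity into the non-vanishing of countably many real-analytic functions on $\Sym_K^+(\lie m)$ and then invoke a Baire-category argument, in the spirit of Schueth \cite{schueth}.

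By Peter--Weyl one decomposes
\[ C^\infty(G/K, \C) \;=\; \bigoplus_{\pi} V_\pi \otimes (V_\pi^*)^K, \]
the sum being over spherical representations $\pi$ of $(G,K)$. Since $\Delta_g$ commutes with the $G$-action, Schur's lemma forces it to act on the $\pi$-summand as $\id_{V_\pi} \otimes A_\pi(g)$ for a self-adjoint operator $A_\pi(g)$ on the finite-dimensional space $(V_\pi^*)^K$. Using the Lie-algebraic formula for $\Delta_g$ (essentially the sum of squares of a $g$-orthonormal basis of $\lie m$, acting on $V_\pi$ via the derived representation), one reads off that the matrix entries of $A_\pi(g)$ are rational functions of the coefficients of $g \in \Sym_K^+(\lie m)$, with denominator a positive power of $\det g$. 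In particular, the characteristic polynomial $\chi_\pi(t; g)$ of $A_\pi(g)$ has coefficients that are real-analytic on the connected open cone $\Sym_K^+(\lie m)$.

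Next I would re-express $G$-simplicity algebraically. Set
\[ D_\pi(g) := \mathrm{disc}\,\chi_\pi(\cdot; g), \qquad R_{\pi,\pi'}(g) := \mathrm{res}\bigl(\chi_\pi(\cdot; g),\, \chi_{\pi'}(\cdot; g)\bigr) \quad (\pi \neq \pi'). \]
Then $g$ is complex $G$-simple if and only if $D_\pi(g) \neq 0$ for every spherical $\pi$ and $R_{\pi,\pi'}(g) \neq 0$ for every unordered pair $\pi \neq \pi'$: the first condition ensures that inside each isotypical component every $\Delta_g$-eigenspace is a single irreducible copy of $V_\pi$, the second that the eigenvalues are not shared across different isotypical components. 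For the (real) $G$-simple case, one fuses each complex-conjugate pair $\pi,\bar\pi$ into a single real-irreducible block before taking the corresponding discriminants and resultants; the resulting functions remain real-analytic in $g$.

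Finally I would apply Baire. There are countably many spherical representations, hence countably many functions $D_\pi$ and $R_{\pi,\pi'}$. By hypothesis some $g_0$ is $G$-simple, so all these functions are nonzero at $g_0$; since $\Sym_K^+(\lie m)$ is connected, none of them can vanish identically, and therefore each of their zero loci is a proper real-analytic subset with empty interior. The set of $G$-simple metrics is then the complement of the union of these zero loci, i.e.\ the intersection of countably many open dense subsets of the Baire space $\Sym_K^+(\lie m)$, and is therefore residual. The main obstacle is the first paragraph: setting up the Lie-algebraic description cleanly so that the rational dependence of $A_\pi(g)$ on $g$ becomes manifest; this is precisely the content of the ``Lie-algebraic characterization of $G$-simplicity'' alluded to just above the proposition. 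Once that dependence is in hand, the argument reduces to the standard non-vanishing-of-finitely-many-analytic-functions routine applied to discriminants and resultants.
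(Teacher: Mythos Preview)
Your approach is essentially the one the paper takes: encode $G$-simplicity via resultants and discriminants of the characteristic polynomials of the Casimir operators on $V^K$, then use that the non-vanishing of each such function at a single $g_0$ implies its zero locus is a proper closed set with empty interior, and conclude by Baire. Two points are worth noting.

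First, the paper's parameterization is cleaner than yours. Rather than working with $g$ directly and obtaining rational functions with denominator $\det g$, the paper writes $g = g_B := \langle B^{-1}\cdot,\cdot\rangle$ for $B\in\Sym_K^+(\lie m)$; then $\Delta_B^V = \sum_{i,j} B_{ij}Y_iY_j$ is \emph{linear} in $B$, so the characteristic polynomial $p_V(B)$ and hence all resultants are genuine polynomials in $B$. This avoids the denominators entirely and lets one argue with algebraic sets rather than analytic ones. The Baire conclusion is the same either way, but the polynomial formulation is tidier.

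Second, your handling of the real case is too vague in one respect. For a spherical representation $V$ of quaternionic type, the quaternionic structure $J$ commutes with $\Delta_g^{V^K}$, so every eigenvalue automatically has even multiplicity, and hence $D_\pi(g) = \mathrm{disc}\,\chi_\pi(\cdot;g)$ vanishes \emph{identically} in $g$. Your discriminant condition therefore cannot be used here. The correct requirement (the paper's condition \eqref{irrcondC}) is that the multiplicities be exactly two, which is detected by the non-vanishing of $\res(p_V(B), p_V''(B))$ rather than $\res(p_V(B), p_V'(B))$. Your phrase ``fuses each complex-conjugate pair into a real-irreducible block'' does not address this, since quaternionic $V$ are self-conjugate. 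Once you substitute the second-derivative resultant in the quaternionic case, the rest of your argument goes through unchanged.
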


Of course, this proposition does not say anything about the problem of existence of $G$-simple metrics on $G/K$. In order to tackle this, one needs to understand the set of spherical representations.

However, in \cite{spectre}, the following result is obtained by analytic techniques, without determining spherical representations.

\begin{thmintro}[Berger-Gauduchon-Mazet \cite{spectre}] \label{thm:BGM}
	Let $G/K$ be a compact homogeneous Riemannian manifold with a $G$-invariant metric and consider the orthogonal splitting $\lie g = \lie k \oplus \lie m$. If the connected component $K^0$ of $K$ acts transitively on the unit sphere of $\lie m$ under the isotropy representation, then $G/K$ has a $G$-simple spectrum.
\end{thmintro}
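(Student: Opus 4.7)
The plan is to reduce $G$-simplicity to a dimension bound on the $K$-invariant subspace of each Laplacian eigenspace. Since $K^0$ acts transitively on the unit sphere of $\lie m$, the isotropy representation is $K$-irreducible and the $G$-invariant metric is unique up to homothety. Moreover, via the exponential map at the base point $p = eK$ the $K$-orbits on $G/K$ coincide with the geodesic spheres centred at $p$, so every $K$-invariant smooth function on $G/K$ depends only on the radial coordinate $r = d(p, \cdot)$.

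In geodesic polar coordinates, the restriction of $\Delta$ to radial functions is a second-order linear ODE in $r$ with a regular singular point at $r = 0$, and the subspace of its globally smooth solutions with a given eigenvalue is one-dimensional. Equivalently, $(G, K)$ is here a Gelfand pair and one may appeal to the classical theory of zonal spherical functions. Either way one obtains the key estimate
\[
\dim_\C (E_\lambda)^K \leq 1 \qquad \text{for every eigenvalue } \lambda.
\]
By Frobenius reciprocity applied to $C^\infty(G/K, \C) = \mathrm{Ind}_K^G \mathbf{1}$, each complex irreducible $G$-representation $V$ occurs in $C^\infty(G/K, \C)$ with multiplicity $\dim V^K$, so the $V$-isotypical component of $E_\lambda$ contributes $(\dim V^K)^2$ to $\dim (E_\lambda)^K$. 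The inequality above forces at most one $V$ to appear in $E_\lambda$, necessarily with $\dim V^K = 1$, and hence $E_\lambda$ is complex $G$-irreducible.

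For real $G$-simplicity, note that $\Delta$ is a real operator, so complex conjugation furnishes $E_\lambda$ with an invariant real structure; therefore $E_\lambda$ is of real type (Frobenius--Schur indicator $+1$) and $E_\lambda \cap C^\infty(G/K, \R)$ is a real $G$-irreducible representation. The main technical obstacle is the identification of $K$-orbits in $G/K$ with geodesic spheres globally, beyond the injectivity radius and across the cut locus, together with the regularity claim for the radial ODE. Both are classical for the compact two-point homogeneous spaces and can be bypassed by invoking the spherical-function theory for Gelfand pairs.
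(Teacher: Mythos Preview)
The paper does not supply its own proof of this theorem; it is quoted from Berger--Gauduchon--Mazet \cite{spectre} and explicitly described there as being obtained ``by analytic techniques, without determining spherical representations.'' Your outline is precisely in that spirit: the transitivity of $K^0$ on the unit sphere of $\lie m$ makes the $K$-orbit space one-dimensional, so $K$-invariant eigenfunctions satisfy a singular Sturm--Liouville problem on an interval and $\dim_\C (E_\lambda)^K\le 1$; the representation-theoretic count then forces $E_\lambda$ to be complex irreducible, and the real structure coming from conjugation on $C^\infty(G/K,\C)$ (which squares to the identity) shows that $E_\lambda$ is of real type. This is the classical argument.

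Two small corrections. First, your sentence ``the $V$-isotypical component of $E_\lambda$ contributes $(\dim V^K)^2$ to $\dim(E_\lambda)^K$'' overstates the case: a priori only part of the $V$-isotypical component of $C^\infty(G/K,\C)$ lies in $E_\lambda$, so the contribution is $m_V(\lambda)\cdot\dim V^K$ with $m_V(\lambda)\le\dim V^K$. This does not matter for the conclusion, since each spherical $V$ occurring in $E_\lambda$ already contributes at least $1$, and that is all you need. Second, you do not need to match $K$-orbits with metric spheres beyond the cut locus: equivariance of $\exp_p$ and transitivity on spheres in $T_pM$ show directly that the orbit space $K\backslash M$ is an interval, which is what the radial ODE argument actually uses. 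With these adjustments the proof is complete and agrees with the approach the paper attributes to \cite{spectre}.
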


A homogeneous space $G/K$ satisfying this sufficient condition must be isotropy irreducible. As any two invariant metrics are constant multiples of each other it follows that in this case every invariant metric on $G/K$ is $G$-simple.  

Examples are given by the homogeneous presentations of compact rank-one symmetric spaces (CROSS) along with some non-symmetric spaces described in Examples \ref{ex:G2SU3} and \ref{ex:SO7G2}.

For symmetric pairs, spherical representations are well understood due to the Cartan-Helgason theorem. They are given by \emph{restricted dominant weights} (see Section \ref{sec:restricted}).
In this work, we use this theory to prove that the CROSS's are the only \emph{irreducible} symmetric spaces with this property. 

\begin{thmintro}[see Thm.~\ref{thmrefl}]
	An irreducible symmetric space $G/K$ has simple $G$-spectrum if, and only if, it has rank one.
\end{thmintro}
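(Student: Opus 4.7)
The ``if'' direction is immediate from Theorem~\ref{thm:BGM}: a rank-one compact irreducible symmetric space is a CROSS, whose isotropy group acts transitively on the unit sphere of $\lie m$, so every invariant metric on it is $G$-simple.

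For the converse, assume $G/K$ is an irreducible compact symmetric space of rank $r\geq 2$. The plan is to exhibit two distinct spherical representations of $G$ whose Laplace eigenvalues coincide; since spherical representations of a symmetric pair are of real type, their real forms will then be non-isomorphic real irreducible summands of the same real eigenspace, which must therefore be $G$-reducible. The main tool is the Cartan--Helgason theorem (Section~\ref{sec:restricted}), identifying the spherical representations of $(G,K)$ with the semigroup $P^+_0$ of \emph{restricted dominant integer weights}, a subsemigroup of rank $r$ inside the restricted weight lattice. The Laplacian of the normal metric acts on the $\lambda$-isotypical component of $L^2(G/K,\C)$ as multiplication by the Casimir eigenvalue
\[
	c(\lambda)\;=\;\langle\lambda,\lambda+2\rho_0\rangle\;=\;|\lambda+\rho_0|^2-|\rho_0|^2,
\]
where $\rho_0$ denotes the weighted half-sum of positive restricted roots. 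Thus $G$-simplicity is equivalent to the injectivity of the map $c:P^+_0\to\R$, and the task is to show this map fails to be injective as soon as $r\geq 2$.

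In coordinates dual to fundamental restricted weights $\omega_1,\dots,\omega_r$, the function $c$ is a positive-definite inhomogeneous quadratic on $\Z_{\geq 0}^r$ with rational coefficients. I would split into cases according to the type of the restricted root system $\Sigma$. In the types whose Dynkin diagram admits a non-trivial automorphism $\sigma$ exchanging two fundamental weights---namely $A_r$ with $r\geq 2$, $D_r$ with $r\geq 4$, and $E_6$---the symmetry preserves both $|\omega_i|^2$ and $\langle\omega_i,\rho_0\rangle$, so $c(\omega_i)=c(\omega_{\sigma(i)})$ and the required coincidence already appears among the fundamental weights. In the asymmetric types of rank $\geq 3$, i.e.\ $B_r,C_r,BC_r$ with $r\geq 3$ and the exceptional $E_7,E_8,F_4$, a pigeonhole argument applies: the number of $\lambda\in P^+_0$ with $c(\lambda)\leq R^2$ grows as $R^r$, while the values of $c$ lie in a fixed fractional lattice and so the number of distinct values in $[0,R^2]$ grows only as $R^2$; this forces coincidences as soon as $r\geq 3$. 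The remaining asymmetric rank-two types $B_2=C_2$, $BC_2$ and $G_2$ must be handled by direct computation: one writes $c$ in integral coordinates and spots a concrete coincidence---for instance in $B_2$ the identity $9^2+7^2=11^2+3^2=130$ yields $c(6\omega_2)=c(3\omega_1+2\omega_2)$.

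The main obstacle is precisely this last step, the rank-two asymmetric cases. Neither the Dynkin symmetry nor the counting estimate applies there, so the argument reduces to a finite number-theoretic verification that the specific rational quadratic form $c$ attached to each of $B_2$, $BC_2$ and $G_2$ is non-injective on $\Z_{\geq 0}^2$. This requires a careful normalization of the restricted root system in each case (in particular accounting for the root multiplicities that enter $\rho_0$ for $BC_2$, and the non-standard length ratio for $G_2$) so that $c$ is evaluated in the correct integral basis. Once a coincident pair $\lambda\neq\mu\in P^+_0$ is produced in every case, the corresponding spherical representations $V_\lambda\not\simeq V_\mu$ both sit inside a common real Laplace eigenspace as non-isomorphic real irreducible summands, so that eigenspace is $G$-reducible, proving that no irreducible symmetric space of rank at least two is $G$-simple.
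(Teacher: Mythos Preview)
Your overall architecture is reasonable, but there is a genuine gap stemming from the sentence ``since spherical representations of a symmetric pair are of real type''. This is false: by Lemma~\ref{lemma:selfdual} the dual of the spherical representation with restricted highest weight $\lambda$ has highest weight $\sigma\lambda$, where $\sigma$ is the canonical involution of the \emph{restricted} root system; whenever that involution is nontrivial (restricted types $A_r$ with $r\geq 2$, $D_{2k+1}$, $E_6$) there are spherical representations of complex type. More damagingly, this is exactly the situation in which your Dynkin-symmetry trick applies, and in those cases the diagram automorphism \emph{coincides} with the duality involution. So the pair $(\omega_i,\omega_{\sigma(i)})$ you produce satisfies $V_{\omega_{\sigma(i)}}\cong V_{\omega_i}^*$, and Proposition~\ref{IrrCond} explicitly excludes such pairs from condition~\eqref{irrcondA}: a coincidence $c(\lambda)=c(\lambda^*)$ is automatic and never obstructs real $G$-simplicity. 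Thus your first case proves nothing for $A_r$ ($r\geq 2$), $D_{2k+1}$, or $E_6$, and in particular the restricted type $A_2$ (Cartan types A~I with $r=2$, A~II with $r=5$, E~IV) is left completely untreated, since it is absent from your rank-two list as well.

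The paper confronts exactly this issue. Its rank $\geq 3$ argument is constructive rather than pigeonhole: from Table~\ref{tab:delta} one locates two simple restricted roots $\beta_1,\beta_2$ of equal length carrying the same coefficient in $2\bar\delta$, takes $\alpha=\beta_1-\beta_2$ so that the reflection $R_\alpha$ fixes $\bar\delta$, and then builds a dominant $v=M_1+\sum_{k\geq 3}m_kM_k$ whose image $R_\alpha v$ is again dominant; the free parameters $m_k$ are finally adjusted via Lemma~\ref{lemma:selfdual} so that $R_\alpha v\neq v^*$. In rank two the paper does a case-by-case computation over the list in Table~\ref{tab:rank2} (the remaining rank-two types have $2\bar\delta$ proportional to $M_1+M_2$ and are handled by the reflection argument). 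Your pigeonhole idea for rank $\geq 3$ can be repaired---one value of $c$ is hit by $\gtrsim R^{r-2}$ weights, hence by at least three for large $R$, so two of them are non-dual---but you still owe a direct treatment of the restricted type $A_2$, where neither symmetry nor counting helps and one must exhibit a concrete non-dual coincidence (e.g.\ for E~IV with $2\bar\delta=(8,8)$ one checks $c(aM_1+bM_2)$ by hand).
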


On the other hand, we obtain reducible $G$-simple symmetric spaces $G/K$ of arbitrary rank by considering Riemannian products of CROSS's.

\begin{thmintro}[see Thm.~\ref{thm:generic1}]
	A generic $G$-invariant metric on the product $G/K = \prod_i G_i/K_i$ of CROSS's is $G$-simple.
\end{thmintro}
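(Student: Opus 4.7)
The plan is to exploit the product structure to decompose the space of metrics, the spherical representations, and the Laplacian, and then to observe that $G$-simplicity reduces to avoiding a countable family of hyperplanes in $\R^s_+$.

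First, since each isotropy module $\lie m_i$ of the factor $G_i/K_i$ is an irreducible $K_i$-representation on which the remaining factors $K_j$ ($j\neq i$) act trivially, the summands $\lie m_1,\dots,\lie m_s$ are pairwise inequivalent irreducible $K$-submodules of $\lie m$. By the discussion preceding Theorem~\ref{thm:BGM}, this identifies $\Sym_K^+(\lie m)$ with the positive orthant $\R^s_+$, so that a $G$-invariant metric on $G/K$ has the form $g_t = \sum_i t_i\, g_i^0$, where $g_i^0$ is a fixed normal metric on $G_i/K_i$. Next, Peter-Weyl applied to the product group $G=\prod G_i$ identifies the complex spherical representations of $(G,K)$ with the external tensor products $V=V_1\boxtimes\cdots\boxtimes V_s$, where each $V_i$ is spherical for $(G_i,K_i)$.

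Second, because each $G_i/K_i$ is isotropy irreducible, every invariant metric on it is proportional to $g_i^0$, and hence the Laplacian of $g_t$ on the $V$-isotypical component of $C^\infty(G/K,\C)$ acts by the scalar
\[ \lambda(V;t) \;=\; \sum_{i=1}^{s} \frac{c_i(V_i)}{t_i}, \]
where $c_i(V_i)$ is the Casimir eigenvalue of $V_i$ with respect to $g_i^0$. Theorem~\ref{thm:BGM} guarantees that each CROSS factor has $G_i$-simple spectrum, so any two real-inequivalent spherical representations of $(G_i,K_i)$ already produce distinct Casimir eigenvalues.

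Third, let $V=\boxtimes V_i$ and $W=\boxtimes W_i$ be two distinct real spherical representations of $(G,K)$; they must differ in some factor $V_j\not\cong W_j$, whence $c_j(V_j)\neq c_j(W_j)$ by the previous step. Therefore $\lambda(V;\cdot)=\lambda(W;\cdot)$ is a nontrivial linear relation in the coordinates $1/t_i$ and defines a closed, nowhere-dense hyperplane $H_{V,W}\subset\R^s_+$. Since the collection of spherical representations is countable, the set $\R^s_+\setminus\bigcup_{V\neq W}H_{V,W}$ is a dense $G_\delta$ by Baire category. On this set every eigenspace of the Laplacian coincides with the isotypical component of a single spherical representation, which has multiplicity one by Cartan-Helgason, so the metric is $G$-simple; this establishes nonemptiness, and Proposition~\ref{prop:resultants} then yields residuality (alternatively, residuality follows directly from the Baire argument above).

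The main subtlety I expect is the real/complex bookkeeping: a complex-type spherical $V_i$ and its conjugate $\overline{V_i}$ produce the same real irreducible component and share Casimir eigenvalues, so the enumeration above must be carried out at the level of real spherical representations, i.e.\ up to complex conjugation in each factor. Once this convention is fixed, the tensor decomposition passes through to real irreducibles and the hyperplane-avoidance argument goes through unchanged.
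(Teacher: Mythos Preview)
Your proposal is correct and follows essentially the same route as the paper: decompose the invariant metrics as a positive orthant $\R^s_+$, write the Casimir eigenvalue on a tensor product $V=\boxtimes V_i$ as a linear function of the metric parameters, and then avoid the countable family of hyperplanes where two such linear functions coincide, using that $G_i$-simplicity of each factor forces $c_j(V_j)\neq c_j(W_j)$ whenever $V_j\not\cong W_j$. Your flagged subtlety about real versus complex type is in fact vacuous for CROSS factors, since their restricted root systems have rank one and hence trivial canonical involution, so every spherical representation is self-dual (indeed of real type, as a conjugate-linear $J$ with $J^2=-\id$ cannot preserve a one-dimensional $V^K$); the paper's proof uses this implicitly.
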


Non-symmetric isotropy irreducible homogeneous spaces  have been classified \cite{wolf_class}, but there is in general no efficient parameterization of their spherical representations. It is therefore not clear whether one can adapt the proof of Theorem \ref{thmrefl} or whether the sufficient condition of Theorem \ref{thm:BGM} is necessary.

Full flag manifolds $G/T$, where $T\subset G$ is a maximal torus, form a class of isotropy reducible spaces for which the spherical representations are well understood 
\cite{SpectraFlag}. This is due to the fact that the irreducible spherical representations in this case are exactly those whose highest weight lies in the root lattice. However, the monoid of spherical representations is not freely generated in this case, and this causes difficulties in obtaining sufficient information on the eigenspaces of the Laplacian of a $G$-invariant metric.

If the pair $(G,K)$ is spherical, i.e. for any irreducible representation of $G$ the set of $K$-fixed points is at most one dimensional, then more is known. In fact, Kr\"amer \cite{kraemer} classified spherical pairs $(G,K)$ with $G$ compact, connected and simple and he also gave explicit descriptions of the weights corresponding to spherical representations.

Using these results, we are able to perform explicit computations on a class of spherical spaces that are circle bundles over Hermitian symmetric spaces. Combining well-known properties of the Laplacian on fiber bundles with totally geodesic fibers along with an explicit analysis of a special case, we are able to prove the following theorem for total spaces of Hopf fibrations $S^{2n+1}\to \CP^n$, $n>1$, by stretching the metric along the fibers. The corresponding result for $n=1$ was established by Schueth in \cite{schueth} by a different argument.

\begin{thmintro}[see Prop.~\ref{prop:hopf}]
Let $n>1$. Then a generic $\SU(n+1)$-invariant metric on the total space of the Hopf fibration $S^{2n+1} \to \CP^n$ is $\SU(n+1)$-simple.
\end{thmintro}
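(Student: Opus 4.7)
The plan is to exploit the fact that $(\SU(n+1),\SU(n))$ is a multiplicity-free spherical pair, so that each isotypical component of $C^\infty(S^{2n+1},\C)$ is already irreducible and $\Delta_{g_t}$ acts on it as a scalar. I first identify $S^{2n+1}=\SU(n+1)/\SU(n)$ and decompose the isotropy representation as $\lie m=\lie m_0\oplus\lie m_1$ with $\lie m_0\cong\R$ the vertical summand spanned by the fiber direction and $\lie m_1\cong\C^n$ the horizontal standard representation of $\SU(n)$. For $n>1$ these summands are inequivalent real irreducibles, so $\Sym^+_{\SU(n)}(\lie m)=\R^2_+$; modulo overall homothety this reduces to the one-parameter family $g_t$ obtained from the round metric by stretching the Hopf fibers by $t>0$. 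Frobenius reciprocity for the multiplicity-free pair gives
\[
C^\infty(S^{2n+1},\C)=\bigoplus_{p,q\geq 0}V_{p,q},
\]
where $V_{p,q}$ is the irreducible $\SU(n+1)$-representation of highest weight $p\omega_1+q\omega_n$.

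Next I compute the scalar $\lambda_{p,q}(t)$ by which $\Delta_{g_t}$ acts on $V_{p,q}$, using the classical formula for the Laplacian on a Riemannian submersion with totally geodesic fibers. The central $S^1\subset\U(n+1)$ rotates the Hopf fibers by isometries of the (actually $\U(n+1)$-invariant) metric $g_t$ and commutes with $\SU(n+1)$, so its weight spaces $\call{H}_k$ in $C^\infty(S^{2n+1},\C)$ are $\Delta_{g_t}$-invariant, and a direct check shows $V_{p,q}\subset\call{H}_{p-q}$. Since the fibers are geodesic circles of length $2\pi t$, the vertical contribution on $\call{H}_k$ is $k^2/t^2$ while the horizontal one is $t$-independent, yielding
\[
\lambda_{p,q}(t)=\mu_{p,q}+\frac{(p-q)^2}{t^2}.
\]
Matching at $t=1$, where $V_{p,q}$ lies in the degree-$(p+q)$ spherical harmonics and $\lambda_{p,q}(1)=(p+q)(p+q+2n)$, pins down $\mu_{p,q}=4pq+2n(p+q)$.

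With this closed form, the genericity argument is a brief combinatorial check. Since $V_{p,q}^*\cong V_{q,p}$, the eigenvalue depends only on the unordered pair $\{p,q\}$. If $\lambda_{p,q}(t)=\lambda_{p',q'}(t)$ for distinct unordered pairs, either $(p-q)^2\neq (p'-q')^2$, which fixes $t$ uniquely, or $(p-q)^2=(p'-q')^2=d^2$ and strict monotonicity of $a\mapsto 4a(a+d)+2n(2a+d)$ in the smaller entry $a$ forces the pairs to coincide, a contradiction. Thus only countably many exceptional $t$ arise, and the set of $\SU(n+1)$-simple metrics is residual.

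It remains to upgrade complex irreducibility of the $V_{p,q}$ to real irreducibility of the eigenspaces. Complex conjugation on $C^\infty(S^{2n+1},\C)$ is $\SU(n+1)$-equivariant and swaps $V_{p,q}\leftrightarrow V_{q,p}$, so for $p\neq q$ the block $V_{p,q}\oplus V_{q,p}$ is the complexification of a real irreducible module. For $p=q$, self-duality rules out complex type, and the multiplicity-one occurrence of $V_{p,p}$ in the complexification of the real representation $C^\infty(S^{2n+1},\R)$ rules out quaternionic type, so $V_{p,p}$ is of real type and its real points form a real irreducible module. The main obstacle I expect is verifying the Riemannian submersion formula with all constants correct and confirming that $V_{p,q}$ has $S^1$-weight exactly $p-q$; but this reduces to a standard Casimir calculation combined with the round-sphere check at $t=1$.
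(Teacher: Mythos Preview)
Your argument is correct and follows the same overall strategy as the paper: exploit that $(\SU(n+1),\SU(n))$ is spherical so each isotypical component is irreducible and the Laplacian acts on it as a scalar, compute that scalar as a function of the metric parameter, and show that coincidences occur only for countably many parameters. The paper carries out the eigenvalue computation purely Lie-algebraically, writing the Casimir as $(\gamma_1-\gamma_2)H_0^2+\gamma_2 C$ and evaluating $H_0^2$ on the $K$-fixed vector via an explicit harmonic-polynomial projection; it then reduces the collision system by the substitution $x=2(n+1)p+n$, $y=2(n+1)q+n$ to repetitions of $x^2+y^2$. You instead compute the eigenvalue geometrically via the Riemannian-submersion formula, read off the vertical contribution from the $S^1$-weight $p-q$, and fix the horizontal constant by matching against the round sphere at $t=1$; your collision analysis is then a direct monotonicity check in the smaller entry. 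Your route avoids the explicit harmonic-polynomial calculation and is somewhat cleaner, while the paper's Casimir decomposition fits more uniformly into its general framework for the other circle bundles in Kr\"amer's list. Your treatment of real irreducibility (ruling out quaternionic type for $V_{p,p}$ via its multiplicity-one occurrence) is also more explicit than the paper's appeal to its Proposition~\ref{prop:resultants}.
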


We treat in Subsection \ref{sec:SU2F} an example of a quotient of $\SU(2)$ by a non-normal, non-spherical finite subgroup and we prove that a generic $\SU(2)$-invariant metric is $\SU(2)$-simple. This space appeared in \cite{art:bedulli_stab} in the context of homogeneous Lagrangian submanifolds of projective spaces.

The paper is organized as follows. In Section \ref{sec:lapl} we discuss the required background on the Laplacian on a homogeneous space and the Peter-Weyl theorem. We give a purely Lie-algebraic characterization of $G$-simplicity and establish Proposition \ref{prop:resultants}. Then we move to the case of compact symmetric spaces in Section \ref{sec:symm} and prove Theorems \ref{thmrefl} and \ref{thm:generic1}. Finally, we treat the non-symmetric examples in Section \ref{sec:nonsym}.

\subsection*{Acknowledgements}
The authors are supported by the Research Training Group 1463 ``Analysis, Geometry and String Theory'' of the DFG and the first author is supported as well by the GNSAGA of INdAM.  Moreover, they would like to thank Fabio Podest\`a for valuable feedback and his interest in this work and Emilio Lauret for pointing out an inaccuracy in an earlier version of this article.

\section{The eigenspaces of the Laplacian of a homogeneous space}\label{sec:lapl}
\subsection{Preliminaries and notations}
Let $G$ be a compact connected Lie group. Then we have an action of $G\times G$ on $G$ given by 
\[ 
(g,h) \cdot x  = gxh^{-1},\qquad g,h,x\in G.
\]
This action induces an action of $G\times G$ on $C^{\infty}(G,\C)$ via 
\[ 
(g,h) \cdot f(x) = f(g^{-1}xh) =: (L_g\circ R_h)(f)(x).
\]
The maps $g\mapsto L_g, h\mapsto R_h$ define the \emph{left and right regular representations of $G$ on $C^\infty(G,\C)$}.

Denote by $\hat G$ a complete set of representatives for the set of isomorphism classes of irreducible complex representations of $G$. Given $V\in \hat G$, we can define smooth functions on $G$ as follows. Consider the tensor product $V^*\otimes V$ and define for a decomposable element $\phi\otimes v\in V^*\otimes V$ the function $f_{\phi,v}\in C^\infty(G,\C)$ by
\[ 
f_{\phi,v}(x) := \phi(x \cdot v).
\]
In the literature, such a function is called a \emph{matrix coefficient} (see e.g. \cite{BtD}). By linear extension, we obtain a linear map 
\[ 
V^*\otimes V\to C^{\infty}(M,\C).
\]
This map is equivariant with respect to the natural $G\times G$ actions on $V^*\otimes V$ and on $C^\infty(G,\C)$. Since $V$ is irreducible,  the $(G\times G)$-representation $V^*\otimes V$ is irreducible, too. Thus, the map $V^*\otimes V\to C^{\infty}(M,\C)$ must be injective. We therefore have a natural way of realizing $V^*\otimes V$ as a subrepresentation of the $(G\times G)$-representation $C^\infty(G,\C)$.

Let $K\subset G$ be a closed subgroup. Then $G$ acts on $M=G/K$ by left translations via $g \cdot xK = (gx)K$.
Smooth functions on $M$ are given by smooth functions on $G$ which are $K$-invariant with respect to the right-regular action, i.e. functions $f$ such that $f(xk) = f(x)$ for all $x \in G$ and $k \in K$. For $V\in \hat G$ define the subspace of $K$-invariant vectors 
\[ 
V^K = \{v\in V\ : kv = v,\ \forall k\in  K\}.
\]
Recall that a $G$-representation $V$ is \emph{spherical with respect to $(G,K)$} if $\dim V^K > 0$. This property is clearly invariant under the equivalence of representations and we denote by $\hat G_K\subset \hat G$ a complete set of representatives for the equivalence classes of spherical representations with respect to $(G,K)$.
It is clear from the above construction, that 
\[ 
V^*\otimes V^K\subset C^\infty(G,\C)^K = C^\infty(G/K,\C).
\]
Using the Haar measure on $G$, we can consider the Hilbert space completion $L^2(G,\C)$ of $C^\infty(G,\C)$. For $g\in G$, the maps $L_g,R_g:C^\infty(G,\C)\to C^\infty(G,\C)$ extend to bounded linear operators on $L^2(G,\C)$.
The Peter-Weyl theorem states in particular that the space generated by the matrix coefficients, as $V$ ranges over the spherical representations of $(G,K)$, is dense in $L^2(G/K, \C)$.

More precisely, the following holds.

\begin{thm}[Peter-Weyl \cite{sepanski, takeuchi}]
The subspaces
\begin{eqnarray}
\bigoplus_{V \in \hat G} V^*\otimes V &\subset & L^2(G,\C)\\
\bigoplus_{V \in \hat G_K} V^* \otimes V^K &\subset & L^2(G/K,\C)
\end{eqnarray}
are dense.
\end{thm}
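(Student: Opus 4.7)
The plan is to establish the result in two steps: first prove density of matrix coefficients in $L^2(G,\C)$, then deduce the $K$-invariant statement by taking $K$-fixed vectors under the right regular action.

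For the first part, my approach would be via the Stone-Weierstrass theorem applied to the continuous functions $C(G,\C)$ with the sup norm, combined with the fact that $C(G,\C)$ is dense in $L^2(G,\C)$ (since $G$ is compact and Haar measure is finite). Let $\mathcal{A}\subset C(G,\C)$ denote the linear span of all matrix coefficients $f_{\phi,v}$ as $V$ ranges over $\hat G$. I would verify the hypotheses of Stone-Weierstrass as follows: (i) $\mathcal{A}$ contains the constants (from the trivial representation); (ii) $\mathcal{A}$ is closed under complex conjugation, because for a unitary representation $V$, conjugation of a matrix coefficient of $V$ produces a matrix coefficient of $V^*$; (iii) $\mathcal{A}$ is closed under multiplication, using the fact that matrix coefficients of $V$ and $W$ give matrix coefficients of $V\otimes W$; (iv) $\mathcal{A}$ separates points of $G$. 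For (iv) it suffices to exhibit, for each $g\neq e$, a finite-dimensional representation that does not send $g$ to the identity; equivalently, a faithful unitary finite-dimensional representation of $G$.

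The key substantive step, and the one I expect to be the main obstacle, is exactly this: showing that the finite-dimensional unitary representations of a compact Lie group separate points. The cleanest way I know is via convolution operators. For $h\in C(G,\C)$ with $h(x^{-1})=\overline{h(x)}$, the operator $T_h\colon L^2(G,\C)\to L^2(G,\C)$ given by $T_h(f)= h\ast f$ is self-adjoint, compact (by Arzelà–Ascoli applied to the image of the unit ball), and commutes with the left regular representation $L_g$. By the spectral theorem for compact self-adjoint operators, $L^2(G,\C)$ decomposes into an orthogonal sum of finite-dimensional eigenspaces of $T_h$, each of which is $L$-invariant. Taking $h$ to be an approximate identity (a sequence of compactly-supported bump functions concentrated near $e\in G$), one shows that these eigenspaces jointly separate points and span a dense subspace of $L^2(G,\C)$. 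Decomposing each finite-dimensional $L$-invariant subspace into irreducibles, one gets a faithful family of finite-dimensional representations, which proves (iv); combined with (i)–(iii) and Stone-Weierstrass, $\mathcal{A}$ is dense in $C(G,\C)$ in the sup norm, hence in $L^2(G,\C)$.

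For the isotypical refinement $L^2(G,\C)\simeq \bigoplus_{V\in\hat G} V^*\otimes V$, I would use Schur orthogonality: matrix coefficients of inequivalent irreducibles are $L^2$-orthogonal, and matrix coefficients of a fixed irreducible $V$ span a subspace naturally isomorphic to $V^*\otimes V$ as a $(G\times G)$-representation. Combined with the density established above, this gives the first displayed line.

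The second statement then follows formally. The Hilbert space $L^2(G/K,\C)$ embeds isometrically into $L^2(G,\C)$ as the subspace of functions invariant under the right $K$-action, that is, as the $K$-fixed vectors for the right regular representation. Taking right-$K$-invariants commutes with the orthogonal direct sum decomposition, and on each summand $V^*\otimes V$ the right $K$-action is trivial on the $V^*$ factor and given by the $K$-action on $V$, so $(V^*\otimes V)^K = V^*\otimes V^K$. This vanishes unless $V$ is spherical, giving the second displayed line. No additional analytic input is required beyond the first part.
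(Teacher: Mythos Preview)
The paper does not give its own proof of this statement: it is quoted as the classical Peter--Weyl theorem with references to Sepanski and Takeuchi, and used as background. So there is nothing in the paper to compare your argument against.

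Your outline is essentially the standard textbook proof and is correct. One minor slip: left convolution $f\mapsto h\ast f$ commutes with the \emph{right} regular representation $R_g$, not the left one; the finite-dimensional eigenspaces of $T_h$ are therefore $R$-invariant rather than $L$-invariant. This does not affect the argument, since a finite-dimensional $R$-invariant subspace still furnishes a finite-dimensional representation on which a given $g\neq e$ acts nontrivially, and matrix coefficients of that representation separate $g$ from $e$. The passage from $L^2(G,\C)$ to $L^2(G/K,\C)$ by taking right-$K$-invariants is correct and is exactly how the cited references handle it.
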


For $V\in \hat G$, we denote by $I(V)$ the $V$-isotypical component inside $L^2(G,\C)$ (resp. $L^2(G/K,\C)$) with respect to the left-regular representation. The Peter-Weyl theorem asserts  that $I(V^*) \cong V^*\otimes V\subset L^2(G,\C)$ (resp. $I(V^*)\cong V^*\otimes V^K\subset L^2(G/K,\C)$). Then $\dim V$ (resp. $\dim V^K$) is the multiplicity of $V^*$ in $L^2(G,\C)$ (resp. $L^2(G/K,\C)$).

We say that $V\in \hat G$ is of \emph{real type} (resp. of \emph{quaternionic type}) if there exists a linear $G$-map $J \in  \End(V)$ such that $J^2 = \id$ (resp. $J^2 = -\id$). If $V$ is neither of real nor of quaternionic type, it is called of \emph{complex type}. 

\begin{lemma}[\cite{BtD}]
Let $V$ be an irreducible representation of $G$ and let $V^*$ be its dual representation.
\begin{enumerate}[label=\arabic*.]
\item $V$ is of real type if, and only if, it is self-dual and equals the complexification of an irreducible real $G$-module;

\item $V$ is of quaternionic type if, and only if, it is self-dual and $V \oplus V^*$ equals the complexification of a real irreducible $G$-module;

\item $V$ is of complex type if, and only if, it is not self-dual.
\end{enumerate}
\end{lemma}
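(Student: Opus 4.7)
The plan is to translate between invariant bilinear forms on $V$, conjugate-linear $G$-equivariant endomorphisms, and real/quaternionic structures on $V$. (I read the $J\in\End(V)$ in the statement as a conjugate-linear $G$-equivariant map, which is the standard convention; if $J$ were complex-linear then $J^2=\id$ would force $J=\pm\id$ by Schur's lemma and the trichotomy would collapse.)

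First, I would observe that self-duality of $V$ is equivalent to the existence of a non-degenerate $G$-invariant bilinear form $B:V\times V\to\C$; by Schur's lemma the space of such forms is at most one-dimensional, so by splitting into symmetric and antisymmetric parts, $B$ must be either symmetric or antisymmetric. Using the $G$-invariant Hermitian inner product $\langle\cdot,\cdot\rangle$ on $V$ guaranteed by compactness of $G$, I would then set up the correspondence $B(v,w)=\langle Jv,w\rangle$, which matches $B$ with a conjugate-linear $G$-equivariant endomorphism $J$ of $V$. A second Schur argument applied to the complex-linear map $J^2$ shows that $J^2$ is a real scalar, positive when $B$ is symmetric and negative when $B$ is antisymmetric; after rescaling $B$ by a positive real constant I may assume $J^2=\pm\id$. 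This immediately proves (3) (no such $J$ exists unless $V$ is self-dual) and reduces (1) and (2) to analyzing $J$ with prescribed sign of $J^2$.

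To identify the real modules in (1) and (2), I would look at the fixed points or quaternionic structure. In the case $J^2=\id$, the real subspace $V^J=\{v:Jv=v\}$ is $G$-invariant and satisfies $V=V^J\oplus iV^J$, giving $V^J\otimes_\R\C\cong V$; irreducibility of $V^J$ as a real $G$-module follows because any real decomposition would complexify to a decomposition of the irreducible complex module $V$. In the case $J^2=-\id$, scalar multiplication by $i$ together with $J$ equips $V$ with the structure of a left $\mathbb{H}$-module commuting with $G$, and the underlying real $G$-module $V_\R$ has complexification $V\oplus\bar V$ (as $\pm i$-eigenspaces of the complexified complex structure); using the invariant Hermitian form to identify $\bar V\cong V^*$ as $G$-representations yields $V_\R\otimes_\R\C\cong V\oplus V^*$. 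The converse directions of (1) and (2) are immediate: a real form of $V$ gives a real structure $J$ with $J^2=\id$, and a quaternionic structure on a real irreducible module with complexification $V\oplus V^*$ pulls back to a $J$ on $V$ with $J^2=-\id$.

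The main technical obstacle I expect is the bookkeeping in case (2): verifying that $V_\R$ is irreducible as a real $G$-module and that the isomorphism $V_\R\otimes_\R\C\cong V\oplus V^*$ is genuinely $G$-equivariant requires careful tracking of complex, conjugate-linear, quaternionic, and dual structures simultaneously, and in particular the identification $\bar V\cong V^*$ via the Hermitian form has to be compatible with the quaternionic structure. The remaining steps reduce quickly to Schur's lemma once the correspondence between $B$ and $J$ is in place.
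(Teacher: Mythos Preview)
The paper does not actually prove this lemma: it is stated with a citation to Br\"ocker--tom Dieck \cite{BtD} and used as a standard background fact. Your proof sketch is correct and is essentially the classical argument one finds in that reference, so there is nothing to compare against here.

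One remark worth making explicit: your parenthetical observation is right. The paper defines real/quaternionic type via a $G$-map $J\in\End(V)$ with $J^2=\pm\id$, but as you note, if $J$ were genuinely $\C$-linear then Schur's lemma would force $J$ to be a scalar and the classification would collapse. The intended meaning (and the one used consistently later in the paper, e.g.\ in the discussion of eigenspaces being $J$-invariant of even dimension) is that $J$ is a conjugate-linear $G$-equivariant map, i.e.\ a real or quaternionic structure in the usual sense. Your proof is written for that correct interpretation.
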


\subsection{The Laplacian of a $G$-invariant metric on $M=G/K$}
A $G$-invariant metric on $G/K$ corresponds to an $\Ad(K)$-invariant inner product on the Lie algebra $\lie g$, which in turn corresponds to a $(G\times K)$-invariant metric on $G$.
So let $g$ be a $(G\times K)$-invariant Riemannian metric on $G$. Let $d = \dim \lie g$ and $k = \dim \lie k$. Then $g$ is in particular a left-invariant metric on $G$ and the Laplacian $\Delta_g$ acts on $f\in C^\infty(G,\C)$ by 
\[ 
\Delta_g(f)(x) = -\sum_{i=1}^{d} \frac{d^2}{dt^2} \biggr |_{t=0}f(x\exp(tY_i)),
\]
for an orthonormal basis $\{Y_1,\dots,Y_d\}\subset \lie g$. Let $V\in \hat  G$, $v\in V,\varphi\in V^*$. Then the Laplacian $\Delta_g$ acts on a matrix coefficient $f_{\phi,v}$ by

\[
\Delta_g(f_{\phi,v})(x) = -\sum_{i=1}^{d} \frac{d^2}{dt^2} \biggr |_{t=0}\phi(x\exp(tY_i)v)= -\sum_{i=1}^{d} \phi(x Y_i^2v) = f_{\phi,\Delta_g^Vv}(x),
\]
where
\[ 
\Delta_g^V = -\sum_{i=1}^{d}Y_i^2: V\to V
\]
is the Casimir operator of $V$ with respect to the inner product on $\lie g$ induced by $g$.
Thus, $\Delta_g$ is compatible with the Peter-Weyl decomposition and acts as on a subspace $V^*\otimes V$ by $\id \otimes \Delta_g^V$.
Using the metric we can split $\lie g$ into orthogonal $K$-invariant  vector subspaces $\lie g = \lie k\oplus \lie m$, where $\lie m = \lie k^\perp$. If we choose an orthonormal basis $\{Y_1,\dots,Y_d\}$ adapted to this splitting, i.e. $\{Y_1,\dots, Y_k\}$ is an orthonormal basis for $\lie k$, we get for the action of $\Delta_g$ on $C^\infty(G,\C)^K$ 
\[ 
\Delta_gf(x) = -\sum_{i=k+1}^{d} \frac{d^2}{dt^2}\biggl |_{t=0}f(x\exp(tY_i))
\]
and on the subspace
$V^*\otimes V^K$ the Laplacian $\Delta_g$ acts by $\id\otimes \Delta^{V^K}_g$ where 
\[ 
\Delta_g^{V^K} = \Delta_g^V \biggr |_{V^K} = -\sum_{i=k+1}^{d}Y_i^2: V^K\to V^K.
\]
If we choose a $G$-invariant inner product on $V$, then the Casimir $\Delta^V_g$ is self-adjoint and non-negative, hence diagonalizable with non-negative real eigenvalues.
It follows that the eigenvalues of $\Delta_g$ on the subspace $V^*\otimes V^K$ are given by the eigenvalues of the Casimir $\Delta_g^{V^K}$ on $V^K$. 

Denote by $V_\lambda^K\subset V^K$ the eigenspace of $\Delta_g^{V^K}$ associated with the eigenvalue $\lambda\in\R_{\geq 0}$. Then we see that the left-regular representation of $G$ on $V^*\otimes V^K\subset L^2(G/K,\C)$ decomposes into the eigenspaces of $\Delta_g$ as follows
\[ 
V^* \otimes V^K \cong \bigoplus_\lambda V^*\otimes V^K_\lambda.
\]

In particular, one necessary condition for the eigenspaces $E_\lambda$ to be \emph{complex} irreducible is that $\dim V^K_\lambda = 1$, i.e. $\Delta_g^{V^K}$ should have simple eigenvalues on $V^K$. Another necessary condition is that $\Delta^{V^K}_g$ and $\Delta^{W^K}_g$ do not share any common eigenvalues if $V$ and  $W$ are inequivalent. Note, however, that $(V^*)^K = (V^K)^*$ and that the Casimir of $(V^*)^K$ acts on $\phi\in (V^*)^K$ by 
\[ 
\Delta^{(V^*)^K}_g(\phi)(v) = -\sum_{i} Y_i^2(\phi)(v) =  -\sum_{i} \phi((-Y_i)^2v) = \phi(\Delta_g^{V^K}v).
\]
Thus, the eigenvalues of $\Delta^{(V^*)^K}_g$ and $\Delta^{V^K}_g$ coincide. In particular, $(M,g)$ is not \emph{complex} $G$-simple as soon as there exists a spherical representation $V$ of complex type, i.e. such that $V\ncong V^*$. So for $(M,g)$ to be complex $G$-simple any spherical representation must be of real or quaternionic type. For a spherical representation $V$ of quaternionic type with quaternionic structure $J$ the eigenspaces of $\Delta^V_g$ will be $J$-invariant, which implies that they must be of even dimension. So if $V$ is of quaternionic type, the eigenspaces of $\Delta_g: V^*\otimes V^K\to V^*\otimes V^K$ cannot be irreducible. We summarize our observations in the following proposition.

\begin{prop}\label{Prop:NecCond}
Let $K\subset G$ be closed subgroup of a compact Lie group and consider the homogeneous space $M=G/K$ equipped with a $G$-invariant Riemannian metric $g$. Then the metric is complex $G$-simple, i.e. the eigenspaces $E_\lambda(M,g)\subset L^2(M,\C)$ of the Laplacian $\Delta_g$ are irreducible complex $G$-modules, provided the following hold: 
\begin{enumerate}[label={CI \arabic*}.,ref={CI \arabic*}]
\item \label{CI1} Any spherical representation $V\in \hat G_K$ satisfies $\dim V_\lambda^K=1$ for any eigenspace of the Casimir $\Delta^V_g:  V^K\to V^K$.
\item \label{CI2} Any spherical representation $V\in \hat G_K$ is of real type.
\item \label{CI3} For any pair $V,W\in \hat G_K$ with $V\ncong W$ the Casimir operators $\Delta^{V^K}_g:V^K\to V^K$ and $\Delta^{W^K}_g: W^K\to W^K$ do not have any eigenvalues in common.
\end{enumerate}
\end{prop}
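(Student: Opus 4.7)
The plan is to read the proposition off the Peter--Weyl decomposition developed in the preceding paragraphs. By Peter--Weyl,
\[
L^2(G/K,\C) = \bigoplus_{V \in \hat G_K} V^* \otimes V^K,
\]
and $\Delta_g$ preserves this splitting, acting on $V^* \otimes V^K$ as $\id_{V^*} \otimes \Delta_g^{V^K}$. Diagonalising $\Delta_g^{V^K}$ gives $V^K = \bigoplus_\lambda V^K_\lambda$, and assembling eigenspaces of $\Delta_g$ with a common eigenvalue $\lambda$ yields
\[
E_\lambda(M,g) = \bigoplus_{V \in \hat G_K} V^* \otimes V^K_\lambda
\]
as $G$-modules, the action being on the first factor. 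Since $V^*$ is irreducible, each summand is isomorphic to $\dim V^K_\lambda$ copies of $V^*$.

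Next I would apply the three hypotheses in turn. Condition \ref{CI1} gives $\dim V^K_\lambda \leq 1$ for every $V$ and every $\lambda$, so each non-zero summand above is a single copy of $V^*$. The identity $\Delta_g^{(V^*)^K}(\phi)(v) = \phi(\Delta_g^{V^K} v)$ already recorded in the text shows that $V$ and $V^*$ share the same Casimir spectrum on their $K$-fixed subspaces; combined with \ref{CI3} this forces $V \cong V^*$ for every spherical $V$ (ruling out complex type) and ensures that at most one $V \in \hat G_K$ contributes a non-zero summand to a given $E_\lambda$. Finally, \ref{CI2} excludes quaternionic type: a $G$-equivariant quaternionic structure $J$ on $V$ would preserve $V^K$ and commute with $\Delta_g^{V^K}$, so each non-zero $V^K_\lambda$ would be $J$-invariant and hence of even complex dimension, contradicting \ref{CI1}.

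Combining these observations, for every $\lambda$ in the spectrum there is a unique $V \in \hat G_K$, necessarily of real type, with $\dim V^K_\lambda = 1$, and the corresponding eigenspace is $E_\lambda \cong V^*$, an irreducible complex $G$-representation, as required. No serious obstacle is expected here: the proof is essentially bookkeeping on top of the Peter--Weyl decomposition and the Casimir identity already established in the text, and the only point demanding care is the interaction between the three conditions and the dualisation $V \mapsto V^*$, in particular checking that all three types (real, complex, quaternionic) are handled correctly.
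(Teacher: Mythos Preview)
Your argument is correct and matches the paper's approach: the proposition is stated there as a summary of the preceding discussion, and your proof organizes exactly that material (Peter--Weyl decomposition, the action of $\Delta_g$ as $\id\otimes\Delta_g^{V^K}$, the dual-Casimir identity, and the $J$-invariance of eigenspaces in the quaternionic case) into an explicit sufficiency argument. Your additional remark that \ref{CI1} together with \ref{CI3} already force every spherical representation to be of real type---so that \ref{CI2} is redundant for sufficiency---is a valid observation that the paper does not spell out.
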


\subsection{Normal homogeneous spaces and spherical pairs}
Suppose the metric $g$ on the homogeneous space $M = G/K$ in the previous paragraph is induced from a bi-invariant metric on $G$. If $M$ is isotropy irreducible, then up to scale this is the only choice of $G$-invariant metric on $M$. 

In this case, for any $V\in \hat G$, the Casimir $\Delta_g^V$ acts as a multiple of the identity on $V$. Hence, $\Delta_g$ acts as a multiple of the identity on $V^*\otimes V^K$. 
Thus, if we look for pairs $(G,K)$ such that the normal homogeneous space $G/K$ has complex $G$-simple spectrum, then by Proposition \ref{Prop:NecCond} we necessarily need $\dim V^K= 1$ for any spherical representation. This motivates the following definition from \cite{kraemer}.

\begin{defi}
Let $K$ be a subgroup of $G$. The subgroup $K$ is \emph{spherical} if every spherical representation $V\in\hat G_K$ satisfies $\dim V^K = 1$. In this case we call $(G,K)$ a \emph{spherical pair}.
\end{defi}
 
When  $G$ is compact, connected and simple, Kr\"amer  classified all possible connected spherical subgroups $K \subset G$ and the list of the weights generating the subset of spherical representations (the \emph{spherical weights}) is given in the fifth column of his table \cite[p.~149]{kraemer}.

The spectrum of a normal homogeneous space can be calculated explicitly using Freudenthal's formula for the eigenvalues of the Casimir operator. If $V_\rho\in\hat G_K$ is the spherical irreducible representation with dominant highest weight $\rho$, then the Laplacian $\Delta^V_g$ acts on $V_\rho^*\otimes V_\rho^K$ with eigenvalue 
\[ 
\lambda(\rho) = (\rho+2\delta,\rho)
\]
where $(\cdot , \cdot)$ denotes the inner product on $\lie g$ induced by the bi-invariant metric and $\delta$ is the sum of the fundamental weights \cite{takeuchi}. An explicit analysis of the function $\rho\mapsto \lambda(\rho)$ will allow us later to determine the $G$-simple compact irreducible symmetric spaces.

\subsection{Real eigenspaces}
The property to require all eigenspaces of the Laplacian $\Delta_g$ associated with a $G$-invariant metric inside $L^2(G/K,\C)$ to be complex irreducible representations fails as soon as there is a quaternionic spherical representation of $G$. In this paragraph we will consider the eigenspaces of $\Delta_g$ inside $L^2(G/K,\R)$. 

In what follows, we will adapt Schueth's \cite{schueth} line of argument to the homogeneous setting. 

We first observe that for any $V\in \hat G$ a choice of invariant hermitian metric induces an isomorphism $\bar V\cong V^*$. For $V\in \hat G_K$ we denote the subspace $V^*\otimes V^K\subset L^2(G/K,\C)$ by $I(V^*)$, as it is the isotypical component associated with the isomorphism class of $V^*$ with respect to the left-regular representation. Define 
\[ \mathcal C_V = I(V^*)+I(V) = \begin{cases} I(V^*) & \text{if $V$ is of real or quaternionic type,}\\
 I(V^*)\oplus I(V) &\text{if $V$ is of complex type.}\end{cases}\]
On $C^\infty(G/K,\C)$ we have a natural real structure given by complex conjugation $f\mapsto \bar f$, which induces a map $I(V^*) = I(\bar V)\to I(V)$ and thus preserves $\mathcal C_V$.
We further put 
\[\mathcal E_V = \mathcal C_V\cap C^\infty(G/K,\R),\]
which is invariant under the left-regular action of $G$.
Then it follows that $\mathcal C_V = \mathcal E_V\otimes \C$. The Laplacian is a real operator and hence acts on $\mathcal E_V$. The following Lemma gives criteria for the eigenspaces of $\Delta_g|_{\mathcal E_V}$ to be irreducible. 

\begin{lemma}
Let $\mu$ be an eigenvalue of $\Delta^{V^K}_g|_{\mathcal E_V}$ with multiplicity $m$. Then the  corresponding eigenspace is irreducible as a real $G$-representation if and only if $m$ is minimal, i.e.
\[ 
m = \begin{cases}1 & \text{if $V$ is of real or complex type}\\ 2 & \text{if $V$ is of quaternionic type.} \end{cases}
\]
\end{lemma}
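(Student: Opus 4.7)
The plan is to exploit the identification $\mathcal{C}_V = \mathcal{E}_V \otimes \mathbb{C}$: the real $\mu$-eigenspace inside $\mathcal{E}_V$ has real dimension equal to the complex dimension of the $\mu$-eigenspace inside $\mathcal{C}_V$, and irreducibility as a real $G$-representation amounts to asking that this dimension match the real dimension of the unique isomorphism class of irreducible real $G$-module appearing in $\mathcal{E}_V$.

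First I would identify this ``model'' irreducible real $G$-module in each of the three cases, using the classification of irreducible real representations recalled earlier in the excerpt. If $V$ is of real type, every real $G$-irreducible summand of $\mathcal{E}_V$ is isomorphic to the real form of $V^*$, of real dimension $\dim_\mathbb{C} V$. If $V$ is of complex or quaternionic type, the model is the real form $W$ of $V \oplus V^*$, of real dimension $2\dim_\mathbb{C} V$; in the complex case this is visible from the decomposition $\mathcal{C}_V = I(V^*) \oplus I(V)$ together with the fact that complex conjugation swaps the two summands, while in the quaternionic case it uses $V \cong V^*$ and the fact that $V$, regarded as a real module, is already irreducible.

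Next I would compute the complex dimension of $(\mathcal{C}_V)_\mu$. In the real and quaternionic cases $\mathcal{C}_V = V^* \otimes V^K$ and hence $(\mathcal{C}_V)_\mu = V^* \otimes V_\mu^K$ has complex dimension $(\dim V)\cdot m$. In the complex case, the identity $\Delta^{(V^*)^K}_g(\phi)(v) = \phi(\Delta^{V^K}_g v)$ established earlier in the paper shows that $\Delta^{(V^*)^K}_g$ has the same eigenvalues with the same multiplicities as $\Delta^{V^K}_g$, so $(\mathcal{C}_V)_\mu = V^* \otimes V_\mu^K \oplus V \otimes (V^*)_\mu^K$ has complex dimension $2(\dim V)\cdot m$. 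Dividing by the real dimension of the model isolates exactly the case of a single irreducible summand: $m=1$ in the real and complex cases, and $m=2$ in the quaternionic case.

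The one point requiring genuine care, which I would address at the end, is to check that $m=2$ really is the minimal possible value when $V$ is quaternionic. The quaternionic structure $J$ on $V$ commutes with the $G$-action, so it preserves both $V^K$ and each Casimir eigenspace $V_\mu^K$; hence $V_\mu^K$ inherits a quaternionic structure and therefore has even complex dimension. The rest of the argument is dimension bookkeeping against the standard classification of irreducible real $G$-modules, so I do not expect any serious obstacle beyond keeping the three cases cleanly separated.
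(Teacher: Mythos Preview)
Your proposal is correct and follows essentially the same approach as the paper: both arguments identify the unique real irreducible $U$ occurring in $\mathcal{E}_V^\mu$ via the real/complex/quaternionic trichotomy and then count its multiplicity, the paper by writing $\mathcal{E}_V^\mu \cong U^{\oplus m}$ (resp.\ $U^{\oplus m/2}$) directly and you by dividing $\dim_\R \mathcal{E}_V^\mu = \dim_\C (\mathcal{C}_V)_\mu$ by $\dim_\R U$. The only substantive content---that $\mathcal{E}_V^\mu$ is isotypic and that $m$ is forced to be even in the quaternionic case---is present in both.
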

\begin{proof}
Let $\mu$ be an eigenvalue of $\Delta^{V^K}_g$ with multiplicity $m$, which is always even if $V$ is of quaternionic type. Let $V^K_\mu\subset V^K, (V^*)^K_\mu\subset V^*, \mathcal C^\mu_V, \mathcal E^\mu_V$ be the corresponding eigenspaces of $\Delta^{V^K}_g,\Delta_g^{(V^*)^K}$. 

As a complex $G$-representation $\mathcal C^\mu_V$ satisfies 
\[ \mathcal C^\mu_V = \begin{cases} V^*\otimes V^K_\mu \cong (V^*)^{\oplus m}\cong V^{\oplus m} & \text{if $V$ is of real or quaternionic type}\\ V^*\otimes V^K_\mu \oplus V\otimes (V^*)^K_\mu \cong (V^*\oplus V)^{\oplus m} &\text{if $V$ is of complex type.}\end{cases} \]
This implies that there exists in each case an irreducible real $G$-representation $U$ such that 
\[ \mathcal C^\mu_V = \begin{cases} U^{\oplus m}\otimes \C & \text{if $V$ is of real type}\\ U^{\oplus m}\otimes \C  &\text{if $V$ is of complex type} \\
U^{\oplus m/2}\otimes \C & \text{if $V$ is of quaternionic type.}\end{cases}\]
Now $C^\mu_V$ is the complexification of $\mathcal E^\mu_V$, i.e. as a real $G$-representation we have 
\[ \mathcal C^\mu_V\cong \mathcal E^\mu_V\oplus i\mathcal E^\mu_V \cong \mathcal (\mathcal E^\mu_V)^{\oplus 2}.\]
Thus the decomposition of $\mathcal E^\mu_V$ into irreducible real representations is 
\[ 
\mathcal E^\mu_V = \begin{cases} U^{\oplus m}& \text{if $V$ is real}\\ U^{\oplus m}  &\text{if $V$ is of complex type} \\
U^{\oplus m/2} & \text{if $V$ is quaternionic.}\end{cases}
\]
We see that this is irreducible if $m=1$ in the real and complex cases and $m=2$ in the quaternionic case.
\end{proof}

\begin{prop}\label{IrrCond}
Let $g$ be a $G$-invariant metric on $M=G/K$. Then the eigenspaces of the Laplacian $\Delta_g$ inside $L^2(M,\R)$ are irreducible $G$-modules if, and only if, the following three conditions are all satisfied: 
\begin{enumerate}[label={RI \arabic*.},ref={RI \arabic*}]
\item \label{irrcondA} For any pair $V,W\in \hat G_K$ with $V\ncong W$ and $V^*\ncong W$ the Casimir operators $\Delta^{V^K}_g,\Delta^{W^K}_g$ have no common eigenvalues. 
\item \label{irrcondB} For any $V\in \hat G_K$ of real or complex type, the eigenvalues of $\Delta^{V^K}_g$ are simple. 
\item \label{irrcondC} For any $V\in \hat G_K$ of quaternionic type, the eigenvalues of $\Delta^{V^K}_g$ are of multiplicity exactly two.
\end{enumerate}
\end{prop}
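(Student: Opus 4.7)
The plan is to combine the Peter-Weyl decomposition with the preceding Lemma, organized around the dualization involution $V\mapsto V^*$ on $\hat G_K$. Writing $[V]=\{V,V^*\}$ for the dual pair, the subspace $\mathcal{C}_V$ defined above is stable under complex conjugation with real form $\mathcal{E}_V$, and $\mathcal{C}_V\cap\mathcal{C}_W=0$ whenever $[V]\ne[W]$. This produces the orthogonal decomposition
\[
L^2(M,\R)=\bigoplus_{[V]}\mathcal{E}_V.
\]
Because $\Delta_g$ is real, $G$-equivariant and acts on each Peter-Weyl isotypical component through the Casimir $\Delta_g^{V^K}$, it preserves every $\mathcal{E}_V$. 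Moreover, the identity between the eigenvalues of $\Delta_g^{V^K}$ and those of $\Delta_g^{(V^*)^K}$ derived just before Proposition \ref{Prop:NecCond} shows that the spectrum of $\Delta_g|_{\mathcal{E}_V}$ depends only on the class $[V]$.

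Given this, for an eigenvalue $\mu$ of $\Delta_g$ on $L^2(M,\R)$ the eigenspace splits as
\[
E_\mu=\bigoplus_{[V]}\mathcal{E}_V^{\mu},
\]
and asking that $E_\mu$ be irreducible as a real $G$-module is equivalent, in two stages, to asking that (i) at most one class $[V]$ contributes a nonzero summand and (ii) that single summand $\mathcal{E}_V^{\mu}$ is itself irreducible. Saying (i) for every $\mu$ is exactly \ref{irrcondA}. Assuming \ref{irrcondA}, the preceding Lemma characterizes (ii): the multiplicity of $\mu$ as an eigenvalue of $\Delta_g^{V^K}$ must equal $1$ when $V$ is of real or complex type and $2$ when $V$ is of quaternionic type, which are precisely \ref{irrcondB} and \ref{irrcondC}. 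Both directions of the equivalence follow.

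The main subtlety I anticipate is the bookkeeping in the complex-type case, where $\mathcal{C}_V=I(V^*)\oplus I(V)$ has two isotypical summands swapped by complex conjugation: one has to verify that an eigenvalue $\mu$ of multiplicity $m$ for $\Delta_g^{V^K}$ gives rise to $m$ (rather than $2m$) copies of an irreducible real $G$-representation inside $\mathcal{E}_V^{\mu}$. This is exactly the content of the preceding Lemma, so once that lemma is in hand the proposition reduces to a clean repackaging, and no further analytic input beyond the Peter-Weyl formula for $\Delta_g$ on matrix coefficients is needed.
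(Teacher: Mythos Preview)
Your proposal is correct and matches the paper's approach: the paper states Proposition \ref{IrrCond} without a separate proof, treating it as an immediate consequence of the preceding Lemma together with the Peter-Weyl decomposition into the blocks $\mathcal{C}_V=\mathcal{E}_V\otimes\C$. Your write-up simply makes explicit the two-step reduction (first separate the classes $[V]$ via \ref{irrcondA}, then apply the Lemma within a single class to get \ref{irrcondB} and \ref{irrcondC}), which is exactly the intended argument.
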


\begin{corol}
If $(G,K)$ is a spherical pair, then the Laplacian associated with a $G$-invariant metric on $G/K$ has irreducible real eigenspaces if it has irreducible complex eigenspaces. Conversely, if every spherical representation is of real type then real irreducibility implies complex irreducibility.
\end{corol}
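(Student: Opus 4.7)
My plan is to deduce each implication by a direct comparison of the three criteria \ref{CI1}--\ref{CI3} for complex $G$-simplicity in Proposition \ref{Prop:NecCond} with the three criteria \ref{irrcondA}--\ref{irrcondC} for real $G$-simplicity in Proposition \ref{IrrCond}. Two elementary observations will make both directions essentially bookkeeping. First, sphericity of $(G,K)$ gives $\dim V^K\leq 1$ for every $V\in\hat G$, so any nonzero Casimir eigenspace $V^K_\lambda$ is automatically one-dimensional; this makes conditions \ref{CI1} and \ref{irrcondB} trivial whenever they apply. Second, no spherical representation in a spherical pair can be of quaternionic type: a quaternionic structure $J$ commutes with the $G$-action and hence preserves $V^K$, and a one-dimensional $J$-invariant complex line would force $J^2$ to act as $|\alpha|^2 \cdot\id$ for some scalar $\alpha\in\C$, contradicting $J^2=-\id$.

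For the forward direction I assume $(G,K)$ is spherical and that the metric is complex $G$-simple, so \ref{CI1}--\ref{CI3} hold, and I verify \ref{irrcondA}--\ref{irrcondC} in turn. Condition \ref{CI2} says every spherical representation is of real type, so no quaternionic spherical representation occurs and \ref{irrcondC} is vacuous. Condition \ref{irrcondB} is immediate from sphericity (or alternatively from \ref{CI1}). Finally, \ref{irrcondA} is a restriction of the stronger \ref{CI3} to the pairs with $V^*\not\cong W$, and therefore follows.

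For the converse I assume every spherical representation is of real type and that the metric is real $G$-simple, so \ref{irrcondA}--\ref{irrcondC} hold, and I verify \ref{CI1}--\ref{CI3}. The hypothesis is exactly \ref{CI2}. Since every spherical $V$ then satisfies $V\cong V^*$, the clause ``$V^*\not\cong W$'' in \ref{irrcondA} is automatic whenever $V\not\cong W$, so \ref{irrcondA} coincides with \ref{CI3}. Condition \ref{irrcondB}, applied to a spherical $V$ (necessarily of real type by hypothesis), states that the eigenvalues of $\Delta^{V^K}_g$ are simple, i.e.\ each nonzero $V^K_\lambda$ is one-dimensional, which is \ref{CI1}.

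I do not anticipate any genuine technical obstacle; the whole argument is a matching of conditions across the two propositions. The only subtlety worth pointing out is exactly where the two hypotheses are used to close the gap between the superficially different conditions \ref{CI3} and \ref{irrcondA}: sphericity together with \ref{CI2} rules out both quaternionic-type and multiplicity issues in the forward direction, while self-duality of all spherical representations is precisely what upgrades \ref{irrcondA} to \ref{CI3} in the converse.
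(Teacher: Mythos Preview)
Your argument is correct and follows essentially the same route as the paper: both proofs amount to matching conditions \ref{CI1}--\ref{CI3} against \ref{irrcondA}--\ref{irrcondC}, using sphericity (resp.\ self-duality) to bridge the gap between \ref{CI3} and \ref{irrcondA}. Your write-up is in fact a bit more careful than the paper's, since you spell out why \ref{irrcondC} is vacuous for a spherical pair (no quaternionic spherical representation can have $\dim V^K=1$, as the conjugate-linear $J$ would force $J^2=|\alpha|^2\id$ on that line) and you explicitly derive \ref{CI1} from \ref{irrcondB} in the converse, which the paper leaves implicit.
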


\begin{proof}
If $(G,K)$ is a spherical pair, then conditions \eqref{irrcondB}, \eqref{irrcondC} and \eqref{CI1} are always satisfied. Let $(G,K)$ have irreducible complex eigenspaces. Then condition \eqref{CI3} implies condition \eqref{irrcondA}, so $(G,K)$ has also real irreducible eigenspaces.

Conversely, if $(G,K)$ has real irreducible eigenspaces and every spherical representation is of real type, then condition \eqref{CI2} is satisfied by assumption and condition \eqref{irrcondA} implies condition \eqref{CI3}.
\end{proof}

Suppose we are given $M=G/K$ and a decomposition 
\[
\lie g = \lie k \oplus \lie m
\]
as above, orthogonal with respect to a fixed $\mathrm{Ad}(G)$-invariant inner product $\langle \cdot, \cdot \rangle$ on $\lie g$. 
Any $\Ad(K)$-invariant inner product $g \in \Sym_K^+(\lie m)$ on $\lie m$ can be written as $g = g_B$, where $g_B:= \langle B^{-1} \cdot, \cdot \rangle$ for a $K$-invariant symmetric positive-definite endomorphism $B \in \Aut_K(\lie m)$.

If $\{ Y_i\}$ is an orthonormal basis of $\lie m$ with respect to $\langle \cdot, \cdot \rangle$, then we can find $A\in \GL(\lie m)$ such that $\{ A Y_i\}$ is an orthonormal basis with respect to $g_B$.  This implies that $B = AA^T$, where the transpose is taken with respect to $\langle\cdot,\cdot\rangle$. We write $AY_i = \sum_j A_{ji}Y_j$. 

For $V\in\hat G_K$, we consider the associated Casimir operator $\Delta^V_B:= \Delta_{g_B}^{V^K}:V^K\to V^K$ given by
\[
\Delta^V_B  = \sum_{i=1}^n (AY_i)^2= \sum_{i,j=1}^n B_{ij}Y_iY_j.
\]
To each $V\in\hat G_K$ we associate the map
\[
p_V:\Sym_K^+(\lie m)\to \C[t],\quad p_V(B)(t) = \det (t \cdot \id_{V^K}-\Delta^V_B)\in\C[t],
\]
i.e. $p_V(B)$ is the characteristic polynomial of $\Delta^V_B$. 

Using the resultant
\[ 
\res: \C[t]\times \C[t]\to \C
\]
which is a polynomial in the coefficients of $p$ and $q$ with the property that 
\[ 
\res(p,q) =0 \qquad \text{$\iff$ $p,q\in\C[t]$ have a common root},
\]
we can rephrase the conditions in Proposition \ref{IrrCond} as follows. Let $V,W\in \hat G_K$. Then $\Delta_B^V$ and $\Delta^W_B$ have no common eigenvalues if and only if $\res(p_V(B),p_W(B)) \neq 0$. We denote by $p_V',p_V''$ the first and second derivative (with respect to $t$) of $p_V$. Then the polynomial $p_V(B)\in \C[t]$ has only simple roots if, and only if, $\res(p_V(B),p_V'(B)) \neq 0$. Thus, there exists $B \in \Sym_K^+(\lie m)$ such that $\Delta^V_B$ has only simple eigenvalues (resp. only eigenvalues of multiplicity at most two) iff the polynomial map $\res\circ(p_V,p_V'):\Sym_K^+(\lie m) \to \C,\quad B \mapsto \res(p_V(B),p_V'(B))$ (resp. $\res\circ(p_V,p_V''):\Sym_K^+(\lie m)\to \C$)  is not the zero map. 

Notice that the formula for $\Delta_B^V$ and hence the polynomial $p_V(B)\in \C[t]$ makes sense for any $B \in \Sym_K(\lie m)$, although the metric $g_B$ of course does not. Moreover, the polynomial in $B$ given by $\res(p_V(B),p_V'(B))$  vanishes on all of $\Sym_K(\lie m)$ if and only if it vanishes on the open subset $\Sym_K^+(\lie m)$.

This allows us to make the following observation, which is analogous to Proposition 3.7 in \cite{schueth}.

\begin{prop} \label{prop:resultants}
The existence of a $G$-invariant metric on $M=G/K$ such that the associated Laplacian has irreducible real eigenspaces is equivalent to the following three conditions being simultaneously satisfied:

\begin{enumerate}[label={\alph*.},ref={\alph*}]
\item \label{irrcondresA} For all $V,W\in \hat G_K$ such that $V\ncong W$ and $V^*\ncong W$ the polynomial 
\[
\res \circ (p_V,p_W)\colon \Sym_K(\lie m) \to \C
\]
is not the zero polynomial.

\item \label{irrcondresB} For all $V\in\hat G_K$ of real or complex type the polynomial 
\[
\res \circ (p_V,p_V')\colon \Sym_K(\lie m) \to \C
\] is not the zero polynomial.  

\item \label{irrcondresC} For all $V\in\hat G_K$ of real or quaternionic type the polynomial
\[
\res \circ (p_V,p_V'') \colon \Sym_K(\lie m)\to\C
\] is not the zero polynomial. 
\end{enumerate}

In particular, the space of $G$-simple invariant metrics on $G/K$ is a residual set in $\Sym^+_K(\lie m)$, provided that it is non-empty.
\end{prop}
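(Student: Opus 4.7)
The plan is to rephrase Proposition \ref{IrrCond} as the non-vanishing of a countable family of polynomial functions on the real vector space $\Sym_K(\lie m)$ and then invoke Baire category, following the model of Proposition 3.7 in \cite{schueth}.

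First I would observe that $\Delta^V_B = \sum_{i,j} B_{ij} Y_i Y_j$ is linear in $B$ as an endomorphism of the finite-dimensional space $V^K$, so the matrix entries of $\Delta^V_B$ in a fixed basis of $V^K$ are linear functions of the coordinates of $B \in \Sym_K(\lie m)$. Consequently the coefficients of the characteristic polynomial $p_V(B)(t)$ are polynomials in $B$. Since $\res(p,q)$ is itself a polynomial in the coefficients of $p$ and $q$, each of the three maps $\res\circ(p_V,p_W)$, $\res\circ(p_V,p_V')$ and $\res\circ(p_V,p_V'')$ is a genuine polynomial function on $\Sym_K(\lie m)$, and thus makes sense on all of $\Sym_K(\lie m)$ even though $B$ must lie in the open cone $\Sym_K^+(\lie m)$ to define a metric.

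Next I would translate the conditions \ref{irrcondA}--\ref{irrcondC} of Proposition \ref{IrrCond} into a pointwise dictionary at each $B \in \Sym_K^+(\lie m)$: by the defining property of the resultant, $\Delta^V_B$ and $\Delta^W_B$ share no eigenvalue iff $\res(p_V(B),p_W(B))\neq 0$; all eigenvalues of $\Delta^V_B$ are simple iff $\res(p_V(B),p_V'(B))\neq 0$; and all eigenvalues have multiplicity at most two iff $\res(p_V(B),p_V''(B))\neq 0$. The one subtlety, which I expect to be the main obstacle, is the quaternionic case in \ref{irrcondC} versus \ref{irrcondresC}: one needs ``multiplicity exactly two'', not just ``at most two''. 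This is resolved by the lemma preceding Proposition \ref{IrrCond}, where the quaternionic structure $J$ on $V$ is seen to commute with $\Delta^V_B$ and to preserve $V^K$, forcing every eigenvalue of $\Delta^V_B|_{V^K}$ to have even multiplicity; combined with the bound $\leq 2$ from $\res(p_V,p_V'')\neq 0$ this yields multiplicity exactly two.

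With this dictionary, the equivalence follows at once. A metric $g_{B_0}$ with real-irreducible eigenspaces exhibits a single $B_0$ at which all three resultants are non-zero, whence the three polynomial maps are not identically zero. Conversely, assume each polynomial map is non-zero. Since $\Sym_K^+(\lie m)$ is a non-empty open subset of the real vector space $\Sym_K(\lie m)$, the vanishing locus of each of these polynomials intersected with $\Sym_K^+(\lie m)$ is a proper closed subset with empty interior. Compactness of $G$ makes $\hat G$, and hence $\hat G_K$, countable, so the set of $B$ satisfying all three families of non-vanishing conditions simultaneously is a countable intersection of open dense subsets of $\Sym_K^+(\lie m)$. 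By Baire's theorem it is a residual set, in particular non-empty, which simultaneously settles the existence part of the equivalence and the residuality assertion.
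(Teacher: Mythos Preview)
Your proof is correct and follows essentially the same approach as the paper: the paper's argument is the terse two-sentence version (``obvious reformulation'' plus ``complement is a countable union of zero loci of non-trivial polynomials''), and you have spelled out the details the paper omits, including the quaternionic subtlety that the paper handles in the discussion preceding Proposition~\ref{IrrCond} rather than in the proof itself.
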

\begin{proof}
It is obvious that the conditions are just reformulations of the corresponding conditions in Proposition \ref{IrrCond}. It follows that the complement of the space of $G$-simple metrics inside $\Sym_K^+(\lie m)$ is given as the intersection of a countable union of vanishing loci of non-trivial polynomials on $\Sym_K(\lie m)$ with the open subset $\Sym_K^+(\lie m)$. Hence the set of $G$-simple metrics on $G/K$ is a residual set, provided it is not empty.
\end{proof}

\begin{rem}
If the isotropy representation $\lie m$ decomposes into a direct sum of $s>0$ inequivalent irreducible representations $\lie m_i$, $i=1,\dots,s$, 
\[
\lie m = \lie m_1\oplus\dots\oplus \lie m_s,
\] 
then one can obtain a simple description of $\Sym_K^+(\lie m)$. For any $K$-invariant inner product the decomposition must be orthogonal and up to scale each $\lie m_i$ carries a unique $K$-invariant inner product by irreducibility. In other words, any metric is of the form $g = \sum_{i=1}^s \beta_i^{-1}\langle\cdot,\cdot\rangle|_{\lie m_i}$ for suitable coefficients $\beta_i>0$ and we call such metrics of diagonal type. If we write such a metric in the form $g_B$ as above, then the matrix $B$ must be block diagonal with respect to the above decomposition, where the blocks are just given by $\beta_i\mathrm{id}_{\lie m_i}$. Hence in this case we may identify $\Sym_K^+(\lie m)$ with the positive orthant $\R^s_+\subset \R^s$. 

If two or more of the $\lie m_i$'s are equivalent as $K$-representations, then their sum need no longer be orthogonal with respect to a $K$-invariant inner product, which therefore need not be of diagonal type. This means that the possible matrices $B$ need no longer be block diagonal. Using that the $\R$-algebra $\mathrm{End}_K(\lie m_i)$ is a division algebra by Schur's lemma and so isomorphic to $\R,\C$ or $\mathbb H$, where $\mathbb H$ denotes the quaternions, one can then in principle describe the possible ``off-diagonal'' blocks of such $B$ explicitly.
\end{rem}

We conclude this  section with a generalization of Lemma~4.9 of \cite{schueth}.
\begin{lemma} \label{lemma:covering}
Let $G/K$ be a homogeneous space satisfying all the conditions in Proposition \ref{IrrCond} with respect to some $G$-invariant metric $g$. Let $\Gamma \subset G$ be a discrete central subgroup and let $\bar G = G/\Gamma$ and $\bar K = K / (K \cap \Gamma)$. Then the metric $g$ descends to a $\bar G$-invariant metric on $\bar G / \bar K$ satisfying the conditions of Proposition \ref{IrrCond}.
\end{lemma}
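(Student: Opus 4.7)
The plan is to realize $\bar G/\bar K$ as a Riemannian quotient of $G/K$ and then to compare the two Peter--Weyl decompositions, showing that the spherical representations of $(\bar G,\bar K)$ together with their Casimir operators embed naturally into those of $(G,K)$, so that the three conditions of Proposition \ref{IrrCond} transfer directly.

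First, since $\Gamma$ is central in $G$, $K\Gamma$ is a closed subgroup and there is a canonical diffeomorphism $\bar G/\bar K\cong G/K\Gamma$. The natural projection $\pi\colon G/K\to G/K\Gamma$ is a covering whose deck group $\Gamma/(K\cap \Gamma)$ acts freely by isometries: freeness uses centrality, since $\gamma\cdot gK=gK$ for all $g\in G$ forces $g^{-1}\gamma g=\gamma\in K$. Because $\Ad$ is trivial on a central subgroup, every $\Ad(K)$-invariant inner product on $\lie g$ is automatically $\Ad(K\Gamma)$-invariant, and hence $g$ descends to a $\bar G$-invariant metric $\bar g$ on $\bar G/\bar K$ for which $\pi$ is a local isometry.

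Second, I would compare the two Peter--Weyl decompositions. By Schur's lemma the central subgroup $\Gamma$ acts on each $V\in\hat G$ through a scalar character $\chi_V$, and pullback along $G\to\bar G$ identifies $\hat{\bar G}$ with $\{V\in\hat G:\chi_V\equiv 1\}$. Since $\bar K$ is the image of $K$, we have $V^{\bar K}=V^K$ for any such $V$, so $\hat{\bar G}_{\bar K}$ is naturally a subset of $\hat G_K$. The discrete quotient satisfies $\bar{\lie g}=\lie g$ and $\bar{\lie k}=\lie k$; in particular the isotropy module $\lie m$, the orthonormal basis used in the Casimir formula, and therefore the Casimir operator $\Delta^V_{\bar g}$ on $V^{\bar K}$ all coincide with $\Delta^V_g$ on $V^K$. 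The real, complex or quaternionic type of $V$ is likewise unaffected under this identification, as is the notion of duality.

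Third, each of the conditions \eqref{irrcondA}--\eqref{irrcondC} in Proposition \ref{IrrCond} is universally quantified over $\hat G_K$ (either over pairs or over individual spherical representations). Restricting these quantifiers to the subset $\hat{\bar G}_{\bar K}\subset \hat G_K$ produces precisely the analogous conditions for $(\bar G,\bar K)$ with respect to $\bar g$, and since both the Casimir spectra and the types are unchanged, each condition for $(G,K,g)$ implies its counterpart for $(\bar G,\bar K,\bar g)$. I do not foresee a significant obstacle: once the metric descent is established, the argument is driven purely by the observation that the spherical data of $(\bar G,\bar K)$ form a subset of the spherical data of $(G,K)$ with unchanged Casimir operators. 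The only delicate points are the freeness of the deck action (for which centrality of $\Gamma$ is essential) and the equality $V^{\bar K}=V^K$, both of which are routine.
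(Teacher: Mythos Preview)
Your proposal is correct and follows essentially the same route as the paper: identify $\bar G/\bar K$ with the quotient of $G/K$ by the central $\Gamma$-action, observe that the Lie-algebra data and hence the Casimir operators are unchanged, and note that the spherical representations of $(\bar G,\bar K)$ are precisely those of $(G,K)$ on which $\Gamma$ acts trivially, with the same types, so the conditions of Proposition~\ref{IrrCond} restrict verbatim. You supply a bit more detail on the freeness of the deck action and the equality $V^{\bar K}=V^K$ than the paper does, but the argument is the same.
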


\begin{proof}
We can identify $\bar G / \bar K$ with $(G/K)/\Gamma$, where the action of $\Gamma$ is by translations. It is then clear that, since $\Gamma$ is discrete and central, every $G$-invariant metric on $G/K$ descends to a $\bar G$-invariant metric on $\bar G / \bar K$. We thus have a bijective correspondence between $G$-invariant metrics on $G/K$ and $\bar G$-invariant metrics on $\bar G/\bar K$. On the Lie algebra level we have $\bar{\lie g} = \lie g, \bar{\lie k} = \lie k , \bar{\lie m} = \lie m$. This gives an identification of $\Sym_K^+(\lie m)$ with the $\bar G$-invariant metric on $\bar G/\bar K$

We observe that the spherical representations of the pair $(\bar G, \bar K)$ are given by the spherical representations of $(G,K)$ on which $\Gamma$ acts trivially.  Such representations are of real, complex or quaternionic type as $\bar G$-representation if and only if they are of respective type as $G$-representations.

So if $(G,K)$ satisfies the conditions \eqref{irrcondresA}, \eqref{irrcondresB}, and \eqref{irrcondresC} of Proposition \ref{prop:resultants}, then they are also satisfied for $(\bar G, \bar K)$, since for the latter space we only have to check the conditions on the subset of representations on which $\Gamma$ acts trivially.
\end{proof}

\section{Compact symmetric spaces}\label{sec:symm}

In this section we apply the above results to compact irreducible symmetric spaces $G/K$. It is known that such symmetric pairs $(G,K)$ are spherical, see e.g. \cite{kraemer, takeuchi}. Our aim in this section is to show that the only irreducible symmetric spaces whose associated Laplacian has irreducible real or complex eigenspaces are the CROSS's.

\subsection{Spherical representations of a compact symmetric pair}\label{sec:restricted}
At the Lie algebra level, let $(\lie g, \lie k)$ be a compact symmetric pair with Cartan splitting $\lie g = \lie k \oplus \lie p$. Recall that its rank is defined to be the dimension $\ell$ of a maximal Abelian subalgebra of $\lie p$. Denote by $r$ the rank of $\lie g$.

Let $\lie h$ be a Cartan subalgebra of the complexification $\lie g^\C$.
It can be split as $\lie h = \lie a \oplus \lie a_{\lie k}$ and note that $\lie a$ and $\lie a_{\lie k}$ are orthogonal with respect to the Killing form.

Let $\alpha$ be a root of $(\lie g^\C, \lie h)$. We denote by $\tilde \alpha$ its restriction to $\lie a$. If $\beta \in \lie a^*$ we consider the space
\begin{equation} \label{resrootspace}
\tilde{\lie g}_{\beta} = \{ X \in \lie g^\C: [A, X] = \beta(A) X \text{ for all $A \in \lie a$} \}.
\end{equation}

A \emph{restricted root} is a functional $\beta \in \lie a^*$ such that $\tilde{\lie g}_{\beta} \neq 0$ and the \emph{restricted root system} of $(\lie g, \lie k)$ is defined to be
\[ 
\Sigma(\lie g, \lie k) := \{ \beta \in \lie a^*: \beta \neq 0, \tilde{\lie g}_{\beta} \neq 0 \}.
\]
Given $\alpha\in \lie h^*$, we denote by $\tilde \alpha\in \lie a^*$ its restriction to $\lie a$.
From the root system $\Sigma(\lie g)$ of $(\lie g^\C, \lie h)$, the restricted root system can be recovered as
\[ 
\Sigma(\lie g, \lie k) = \{ \tilde \alpha: \alpha \in \Sigma(\lie g), \alpha|_{\lie a} \neq 0 \}.
\]

The set of restricted simple roots is

\[ 
\Pi (\lie g, \lie k) = \{ \tilde \alpha: \alpha \in \Pi(\lie g), \alpha|_{\lie a} \neq 0 \},
\]
where $\Pi(\lie g)$ is a system of simple roots for $\lie g$.

The set of restricted roots $\Sigma(\lie g, \lie k)$ forms a \emph{restricted root system} in $\lie a^*$, namely they span $\lie a^*$ and they are closed under reflections.

Unlike root systems in the classical sense, if $\alpha$ is in a restricted root system $\Sigma$, then also its multiples $\pm 2 \alpha$ and $\pm \frac 1 2 \alpha$ may occur in $\Sigma$. Moreover, the dimension of the restricted root space \eqref{resrootspace} can be greater than one and this dimension is called the \emph{multiplicity} of the restricted root $\beta$ and denoted by $m_\beta$. Equivalently $m_\beta$ is given by the number of roots $\alpha\in \Sigma(\lie g)$ which satisfy $\tilde\alpha = \beta$. 

Let $\{ \alpha_1, \ldots, \alpha_r \}$ be simple roots of $\lie g$ and let $\{\gamma_1, \ldots, \gamma_\ell \}$ be simple restricted roots of  $(\lie g, \lie k)$.

As in Takeuchi \cite{takeuchi}, let 
\[ 
\beta_i = \begin{cases} 2 \gamma_i & \text{ if $2\gamma_i$ is a restricted root}\\ \gamma_i & \text{ otherwise}. \end{cases}
\]  

A \emph{restricted weight} is a functional $\lambda \in \lie a^*$ such that $\frac{2(\lambda, \alpha)}{(\alpha, \alpha)} \in \Z$ for all restricted roots $\alpha$.

The \emph{fundamental restricted weights} $M_i$ satisfy
\begin{equation} \label{fundwg}
\frac{(M_i, \beta_j)}{(\beta_j, \beta_j)} = \delta_{ij}.
\end{equation}

For the theory of restricted root systems we refer to Helgason \cite{helgason}. 

The spherical weights, i.e. the highest weights of spherical representations, of $(G,K)$ can be characterized in the following way.

\begin{thm}[Cartan-Helgason]
Let $(G,K)$ be a compact symmetric pair and let $\lambda$ be the highest weight of a representation $V$ of $G$. Then $V$ if spherical with respect to $K$ if, and only if,
\begin{equation} \label{eq:CartanHelgason} 
\lambda|_{\lie a_{\lie k}} = 0 \text{ and } \frac{2 (\lambda, \alpha)}{(\alpha, \alpha)} \in \Z_+ \text{ for all positive $\alpha \in \Sigma(G,K)$}.
\end{equation}
\end{thm}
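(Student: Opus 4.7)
The plan is to prove both implications of the equivalence using the Iwasawa decomposition of the complexification $G^\C = KAN$, where $A = \exp(\lie a)$ and $N$ is the nilpotent subgroup corresponding to the positive restricted roots.

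For the necessity, suppose $V = V_\lambda$ contains a nonzero $K$-fixed vector $u$. Since $\lie a_{\lie k} \subset \lie k$, the vector $u$ is annihilated by $\lie a_{\lie k}$, so $u$ lies in the sum of those $\lie h$-weight spaces whose weights vanish on $\lie a_{\lie k}$. Analyzing the $K$-orbit through the highest weight line in the flag variety $G^\C/B$ (using that $KAN$ is open in $G^\C$) one shows that $u$ has a nonzero component in the highest weight line $\C v_\lambda$, whence $\lambda|_{\lie a_{\lie k}} = 0$. For the integrality, one considers, for each positive restricted root $\alpha$, the $\lie{sl}_2$-subalgebra generated by the restricted root spaces $\tilde{\lie g}_{\pm\alpha}$ and applies standard $\lie{sl}_2$-representation theory: the existence of a $K$-fixed vector, translated to an eigenvalue condition on the associated coroot acting on the highest weight, forces $\frac{2(\lambda,\alpha)}{(\alpha,\alpha)}\in\Z_+$.

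For sufficiency, given $\lambda$ satisfying both conditions, I would construct a $K$-fixed vector by averaging the highest weight vector, setting $u := \int_K (k\cdot v_\lambda)\,dk$. This is manifestly $K$-fixed, so the task reduces to showing $u\neq 0$. Pair $u$ with a lowest weight vector $\phi\in V^*$: the Iwasawa decomposition rewrites $\phi(u)$ as an integral over $K$ of an expression of the form $e^{\lambda(H(k))}$, where $H:G^\C\to\lie a$ denotes the Iwasawa $\lie a$-projection. The condition $\lambda|_{\lie a_{\lie k}} = 0$ ensures $v_\lambda$ is fixed by the centralizer $M = Z_K(A)$, so that the integrand descends to $K/M$, while the integrality condition ensures that the holomorphic extension of $e^{\lambda(\log a)}$ from $A$ yields a well-defined function on $K$. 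Together these conditions make the integrand a positive spherical function on $K/M$, and hence the integral is strictly positive.

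The main obstacle is handling restricted root systems that contain doubled roots, i.e.\ pairs $\alpha, 2\alpha$ both in $\Sigma(\lie g, \lie k)$. In the necessity part the $\lie{sl}_2$-analysis must be refined to an analysis of the larger subalgebra spanned by $\tilde{\lie g}_{\pm\alpha}\oplus\tilde{\lie g}_{\pm 2\alpha}$, for which the integrality constraint on $\lambda$ is slightly subtler; similarly, in the sufficiency argument the Iwasawa positivity calculation must account for the non-abelian structure of $N$ in this case. A secondary difficulty is keeping careful track of the two components of $\lie h = \lie a \oplus \lie a_{\lie k}$ when decomposing $V$ into weight spaces, since the two conditions on $\lambda$ in \eqref{eq:CartanHelgason} reflect constraints of fundamentally different origin.
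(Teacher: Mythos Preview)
The paper does not prove this theorem; it is stated as background and attributed to \cite{helgason1993geometric} and \cite{takeuchi}. There is therefore no proof in the paper to compare against.

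Your outline follows the classical argument, but the framework contains a genuine error: there is no Iwasawa decomposition $G^\C = KAN$. With $K$ compact and $A = \exp(\lie a)$, $N$ built from positive restricted root spaces, the product $KAN$ has real dimension $\dim_\R\lie g$, which is only half of $\dim_\R G^\C$. The decomposition you need is $G_0 = KAN$ for the \emph{noncompact dual} $G_0$, the connected real form of $G^\C$ with Lie algebra $\lie k \oplus i\lie p$. Finite-dimensional holomorphic representations of $G^\C$ restrict compatibly to both $G$ and $G_0$, so the problem is transported to $G_0$ and argued there. With this correction your necessity argument becomes the standard one: openness of $K^\C B$ in $G^\C$ (which \emph{does} follow from the noncompact Iwasawa, or from Matsuki's orbit theory) yields the nonzero component of $u$ along $v_\lambda$, and the $\lie{sl}_2$-analysis gives integrality.

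The sufficiency sketch has the same issue. The projection $H(\cdot)$ you invoke is the $\lie a$-component in $G_0 = KAN$; as written, integrating $\phi(k\cdot v_\lambda)$ over $k\in K$ and appealing to $H(k)$ gives nothing, since every $k\in K$ has trivial Iwasawa $A$-part. The actual positivity comes from rewriting the matrix coefficient on $G_0$, using that $v_\lambda$ is $MAN$-semi-invariant, and expressing $\phi(u)$ as an integral of a genuine exponential $e^{-(\lambda+\rho)(H(\bar n))}$ over $\bar N$ (or an equivalent integral after a change of variables). Also, the integrality condition is not what makes $e^{\lambda}$ ``extend to $K$''; rather it guarantees that the highest weight space is pointwise fixed by $M = Z_K(\lie a)$, which is what allows the averaging to produce something nonzero. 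Your caution about doubled roots is well placed: this is exactly where Helgason's argument requires extra care and is the reason the paper introduces the $\beta_i$ alongside the $\gamma_i$.
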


For this result, we refer to \cite{helgason1993geometric}, see also \cite{takeuchi}.

This means that spherical representations of $(G,K)$ are parameterized by dominant restricted weights, which can be written uniquely as $\lambda = \sum_k a_k M_k$ with the $M_k$ defined in \eqref{fundwg} and coefficients $a_k\in\Z_+$.

The data given by the fundamental root system of $(\lie g, \lie k)$ can be represented graphically by means of the \emph{Satake diagram}, see e.g. \cite{SatakeDiag}.

One starts with the Dynkin diagram of $\lie g$ and paints the node corresponding to the simple root $\alpha$ in black if it vanishes on ${\lie a}$ and in white if it does not. Moreover, two simple roots $\alpha$ and $\beta$ are joined by a double tipped arrow if they are related by the \emph{Satake involution}, see \cite{takeuchi}.

If $\pi_i$ are the fundamental weights of $\lie g$, then the fundamental restricted weights are given by
\begin{equation}
M_i = \begin{cases}
\pi_i \text{ if $\alpha_i$ has no arrows and is connected to a black node}\\
2 \pi_i \text{ if $\alpha_i$ has no arrows and is  not connected to a black node}\\
\pi_i + \pi_j \text{ if $\alpha_i$ and $\alpha_j$ are joined by an arrow}.
\end{cases}
\end{equation}

We isolate the following Lemma for future reference. It can be proven by examining the list of restricted root systems and Satake diagrams for simple Lie algebras from \cite{SatakeDiag, Sugiura}.

\begin{lemma} \label{lemma:selfdual}
Let $(G,K)$ be symmetric pair and let $\lambda = \sum_k m_k M_k$ be the highest weight of a spherical representation, where the $M_k$ are fundamental restricted weights. Then the dual representation is given by the highest weight $\lambda^* = \sum_k m_{\sigma k} M_k$, where $\sigma$ is the permutation corresponding to the canonical involution of the restricted root system.
\end{lemma}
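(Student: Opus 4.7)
The plan is to reduce the statement to a compatibility between the opposition involution of the full root system of $\lie g$ and the canonical involution $\sigma$ of the restricted root system $\Sigma(\lie g,\lie k)$, which can then be verified directly on Satake diagrams.

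First I would invoke the general fact that for any $V\in\hat G$ with highest weight $\lambda$, the dual representation $V^*$ has highest weight $-w_0(\lambda)$, where $w_0$ is the longest element of the Weyl group $W(\lie g,\lie h)$. Since $K$ is compact, the canonical pairing identifies $(V^*)^K$ with the dual of $V^K$, so $V^*$ is spherical whenever $V$ is. By the Cartan--Helgason theorem, $\lambda^* := -w_0(\lambda)$ is then again a restricted dominant weight vanishing on $\lie a_{\lie k}$, hence a non-negative integer combination of the $M_k$.

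The heart of the argument is the identity $-w_0(M_k) = M_{\sigma(k)}$. On the fundamental weights $\pi_i$ of $\lie g$ one has $-w_0(\pi_i) = \pi_{\iota(i)}$, where $\iota$ is the opposition involution of the Dynkin diagram. Using the explicit expression of each $M_k$ as $\pi_i$, $2\pi_i$, or $\pi_i+\pi_j$ recalled just above the statement, together with the tabulation in \cite{SatakeDiag, Sugiura}, one checks that $\iota$ preserves the partition of the nodes into black ones, white ones with no arrow, and arrow-pairs, and that the permutation of the $M_k$ it induces agrees with $\sigma$. Granted this, the lemma follows from
\[
\lambda^* = -w_0\Bigl(\sum_k m_k M_k\Bigr) = \sum_k m_k M_{\sigma(k)} = \sum_k m_{\sigma(k)} M_k,
\]
where the last step is a reindexing that uses $\sigma^2 = \id$.

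The main obstacle is the case-by-case verification of $-w_0(M_k) = M_{\sigma(k)}$. In most cases either $\iota$ or $\sigma$ (or both) is the identity and there is nothing to check; only the families for which $\lie g$ has a nontrivial outer automorphism, namely $A_n$, $D_n$, and $E_6$, require a genuine verification. In each of those the outer symmetry of the Dynkin diagram restricts to the known outer symmetry of the associated restricted root system, so the check reduces to matching two explicit permutations and is routine.
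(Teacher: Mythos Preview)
Your proposal is correct and follows essentially the same route as the paper, which simply states that the lemma ``can be proven by examining the list of restricted root systems and Satake diagrams for simple Lie algebras'' from the cited references. You have made explicit the mechanism behind that check---reducing to the compatibility of the opposition involution $-w_0$ with the Satake data and hence with the canonical involution of the restricted system---but the substance of the argument is the same case-by-case verification the paper points to.
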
 

\subsection{The weight $\delta$} \label{sec:delta}

Let $\delta \in \lie h^*$ be the strictly dominant weight given by the half sum of all positive roots of $\lie g$. Its components along $\lie a_{\lie k}^*$ and $\lie a^*$ are given by Lemma~5.1 of \cite{GinGoo}.

\begin{lemma}
The component along $\lie a^*$ of $\delta$ is given by
\begin{equation} \label{eq:bardelta}
\bar \delta = \frac 1 2 \sum_{\xi \in \Sigma^+(\lie g, \lie h)} m_\xi \xi,
\end{equation}	
where $m_\xi$ is the multiplicity of the positive restricted root $\xi$.
\end{lemma}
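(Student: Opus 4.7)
The plan is to start from the defining expression $\delta = \tfrac{1}{2}\sum_{\alpha\in\Sigma^+(\lie g)}\alpha$ for the half-sum of positive roots of $(\lie g^\C, \lie h)$ and simply restrict it to $\lie a$. Since the orthogonal splitting $\lie h = \lie a \oplus \lie a_{\lie k}$ gives a corresponding decomposition of $\lie h^*$, the component $\bar\delta$ of $\delta$ along $\lie a^*$ is $\delta|_{\lie a} = \tfrac{1}{2}\sum_{\alpha\in\Sigma^+(\lie g)} \tilde\alpha$, and the task reduces to rewriting this sum as a sum over restricted roots.

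First I would fix compatible orderings on $\lie h^*$ and $\lie a^*$ in the standard way used to construct restricted root systems: take a lexicographic order on $\lie h^* = \lie a^*\oplus \lie a_{\lie k}^*$ in which $\lie a^*$ has priority. With this choice, every positive root $\alpha \in \Sigma^+(\lie g)$ satisfies either $\tilde\alpha = 0$ (so that $\alpha$ is a compact root supported on $\lie a_{\lie k}^*$) or $\tilde\alpha \in \Sigma^+(\lie g, \lie k)$. The roots of the first kind drop out of the sum on restriction, leaving
\[
\bar\delta = \frac{1}{2}\sum_{\substack{\alpha\in\Sigma^+(\lie g)\\ \tilde\alpha \neq 0}} \tilde\alpha.
\]

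Next I would group the remaining terms by their common restriction. The restricted root space $\tilde{\lie g}_\xi$ from \eqref{resrootspace} decomposes as $\bigoplus_{\tilde\alpha = \xi}\lie g_\alpha$, and the compatibility of the orderings ensures that for a positive restricted root $\xi$ every such $\alpha$ is positive. Hence the number of positive roots mapping onto $\xi$ equals $\dim \tilde{\lie g}_\xi = m_\xi$. Substituting this count into the above expression and collecting gives
\[
\bar\delta = \frac{1}{2}\sum_{\xi\in\Sigma^+(\lie g, \lie k)} m_\xi\,\xi,
\]
which is exactly \eqref{eq:bardelta} (the sum in the statement over $\Sigma^+(\lie g, \lie h)$ is to be read as a sum over $\Sigma^+(\lie g, \lie k)$).

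The only subtlety, and the sole nontrivial input, is the justification of the compatible orderings: one needs to know that positive roots split cleanly into those vanishing on $\lie a$ and those whose restriction lies in $\Sigma^+(\lie g, \lie k)$, and that the positive preimages of a fixed positive restricted root under restriction to $\lie a$ exhaust its full multiplicity. This is a classical fact about restricted root systems (see, e.g., Helgason \cite{helgason}), and once it is in place the remainder of the argument is pure bookkeeping.
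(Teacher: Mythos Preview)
Your argument is correct and is precisely the standard proof of this fact. Note, however, that the paper does not actually supply a proof of this lemma: it merely quotes the result, attributing it to Lemma~5.1 of \cite{GinGoo}. So there is nothing to compare against beyond observing that your proof is the expected one---restriction of $\delta$ to $\lie a$ via the orthogonal splitting $\lie h = \lie a \oplus \lie a_{\lie k}$, a compatible choice of positivity, and a count of the fibres of the restriction map $\alpha\mapsto\tilde\alpha$. Your remark that the index set $\Sigma^+(\lie g,\lie h)$ in the displayed formula should be read as $\Sigma^+(\lie g,\lie k)$ is also well taken.
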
	

We want to compute the coefficients of $\bar \delta$ with respect to the basis of restricted fundamental weights $\{M_i\}$. We have the following Lemma. 

\begin{lemma}
Let $\bar \delta = \sum_{i=1}^\ell k_i M_i$ be the expression of $\bar \delta$ with respect to the basis of fundamental restricted weights. Then we have
\begin{equation}
k_i = 
\begin{cases}
\frac 1 2 m_{\gamma_i} & \text{ if $m_{2 \gamma_i} = 0$} \\
\frac 1 4 ( m_{\gamma_i} + 2 m_{2 \gamma_i}) & \text{ if $m_{2 \gamma_i} \neq 0$}.
\end{cases}
\end{equation}
\end{lemma}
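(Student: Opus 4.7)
The plan is to compute $k_i$ directly from the defining property of the fundamental restricted weights, namely $k_i = (\bar\delta,\beta_i)/(\beta_i,\beta_i)$, and to reduce the computation to a Weyl-group argument in the restricted root system. The key auxiliary statement I want to establish first is the following analogue of the classical fact that $s_{\alpha_i}$ permutes positive roots other than $\alpha_i$: for each simple restricted root $\gamma_i$, the reflection $s_{\gamma_i}$ permutes the set $\Sigma^+(\lie g,\lie k)\setminus\{\gamma_i,2\gamma_i\}$ of positive restricted roots (and preserves their multiplicities, since the restricted Weyl group comes from $\mathrm{Ad}(N_K(\lie a))$ and hence acts on the restricted root spaces by isomorphisms). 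Granting this, any $\xi \neq \gamma_i,2\gamma_i$ with $\xi>0$ has $s_{\gamma_i}(\xi)>0$, while $s_{\gamma_i}$ sends $\gamma_i\mapsto -\gamma_i$ and (when $2\gamma_i$ is a restricted root) $2\gamma_i\mapsto -2\gamma_i$.

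Applying $s_{\gamma_i}$ to $\bar\delta=\tfrac12\sum_\xi m_\xi\xi$ and using the invariance of multiplicities under the restricted Weyl group, the contributions from roots other than $\gamma_i$ and $2\gamma_i$ cancel pairwise, so
\[
s_{\gamma_i}(\bar\delta) \;=\; \bar\delta \;-\; \bigl(m_{\gamma_i} + 2m_{2\gamma_i}\bigr)\,\gamma_i.
\]
Comparing this with $s_{\gamma_i}(\bar\delta)=\bar\delta-\tfrac{2(\bar\delta,\gamma_i)}{(\gamma_i,\gamma_i)}\gamma_i$ yields the crucial identity
\[
\frac{2(\bar\delta,\gamma_i)}{(\gamma_i,\gamma_i)} \;=\; m_{\gamma_i} + 2m_{2\gamma_i}.
\]

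The final step is a case split according to the definition of $\beta_i$. If $m_{2\gamma_i}=0$, then $\beta_i=\gamma_i$ and $k_i=(\bar\delta,\gamma_i)/(\gamma_i,\gamma_i)=\tfrac12 m_{\gamma_i}$. If $m_{2\gamma_i}\neq 0$, then $\beta_i=2\gamma_i$ and a short calculation gives $k_i=(\bar\delta,2\gamma_i)/(2\gamma_i,2\gamma_i)=\tfrac{1}{4}(m_{\gamma_i}+2m_{2\gamma_i})$. Both match the claimed formulas.

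I expect the main obstacle to be the auxiliary statement about $s_{\gamma_i}$: while it is the exact analogue of a standard fact, in a possibly non-reduced restricted (BC-type) system one has to be careful that both $\gamma_i$ and $2\gamma_i$ become negative under $s_{\gamma_i}$ and that no other root behaves anomalously. This must be checked by invoking the structure of restricted root systems (they are of types $A_\ell,B_\ell,C_\ell,D_\ell,E_6,E_7,E_8,F_4,G_2$, or $BC_\ell$), together with the fact that the restricted Weyl group $W(\lie g,\lie k)=N_K(\lie a)/Z_K(\lie a)$ acts on $\lie a$ by orthogonal transformations that preserve the eigenspace decomposition \eqref{resrootspace}, hence the multiplicities $m_\xi$.
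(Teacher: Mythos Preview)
Your proposal is correct and follows essentially the same route as the paper's proof: both compute $s_{\gamma_i}(\bar\delta)$ using that the simple reflection permutes the positive restricted roots not proportional to $\gamma_i$ (with multiplicities preserved), extract the identity $\frac{2(\bar\delta,\gamma_i)}{(\gamma_i,\gamma_i)} = m_{\gamma_i}+2m_{2\gamma_i}$, and then split on whether $\beta_i=\gamma_i$ or $\beta_i=2\gamma_i$. Your concern about the auxiliary permutation statement in the non-reduced case is exactly the point the paper handles by citing Helgason \cite[Lemma~2.21, Chap.~VII]{helgason}, so you may simply invoke that reference rather than arguing by the classification of restricted root systems.
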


\begin{proof}
The proof goes along a similar line as the classical case, but taking into account that we have a restricted root system.

Consider the transformation $\sigma_i$ in the restricted Weyl group associated to the simple restricted root $\gamma_i$. It is proved in Helgason \cite[Lemma~2.21, Chap.~VII]{helgason} that $\sigma_i$ permutes all the positive roots not proportional to $\gamma_i$ and that, in our base, such roots are $\gamma_i$ and, in case, $2 \gamma_i$.

Taking $\bar \delta$ as in \eqref{eq:bardelta}, arguing as in the classical case, we have that
\[
\sigma_i \bar \delta = \bar \delta - (m_{\gamma_i}+ 2 m_{2 \gamma_i}) \gamma_i.
\]

We then have
\begin{align*}
( \bar \delta - (m_{\gamma_i}+ 2 m_{2 \gamma_i}) \gamma_i, \gamma_i)	&= (\sigma_i^2 \bar \delta, \sigma_i \gamma_i) \\
	&= (\bar \delta, -\gamma_i).
\end{align*}

So we have that
\[
- \frac{(\bar \delta, \gamma_i)}{(\gamma_i, \gamma_i)} =  \frac{(\bar \delta, \gamma_i)}{(\gamma_i, \gamma_i)} - (m_{\gamma_i}+ 2 m_{2 \gamma_i}),
\]
that means
\begin{equation} \label{eq:prodgamma}
\frac{(\bar \delta, \gamma_i)}{(\gamma_i, \gamma_i)} = \frac 1 2 (m_{\gamma_i}+ 2 m_{2 \gamma_i}).
\end{equation}
Recalling how $\beta_i$ is defined, together with the relation \eqref{fundwg}, the expression \eqref{eq:prodgamma} gives us the claim.
\end{proof}
	
We summarize in Table \ref{tab:delta} the coefficients of $2 \bar \delta$ for every Cartan type.
	
\begin{table}[htbp]
\newcolumntype{C}{>{$}l<{$}}
\begin{center}
\begin{tabular}{cCC}
	Cartan type				& 2 \bar \delta 					& \text{Range}\\
	\toprule
	A I						& (1,\ldots, 1)						& \\
	A II						& (4, \ldots, 4)				&\\
	A III (1)				& (2, \ldots, 2, r -2 \ell+ 2)			& r \geq 4 \\
	A III (2)				& (2, \ldots, 2,1)					&\\
	B I						& (1, \ldots, 1, 2(r-\ell)+1) 		& r \geq 2\\
	C I						& (1, \ldots, 1)					&\\		
	C II	(1)					& (4, \ldots, 4, 2(r-2\ell) +3) & r \geq 3\\
	C II (2)					& (4, \ldots, 4,3)				&\\
	D I (1)					& (1, \ldots, 1,2)					&\\
	D I (2)					& (1, \ldots, 1, 2(r-\ell))			& r \geq 4\\
	D I (3)					& (1,\ldots, 1)						&\\
	D III (1)				& (4, \ldots, 4,1)					&\\
	D III (2)				& (4, \ldots, 4,3)					&\\
\midrule
	E I						& (1,1,1,1,1,1)						&\\
	E II					& (2,1,2,1)							&\\
	E III					& (5,6)								&\\
	E IV					& (8,8)								&\\
	E V						& (1,1,1,1,1,1,1)					&\\
	E VI					& (1,1,4,4)							&\\
	E VII					& (8,8,1)							&\\
	E VIII					& (1,1,1,1,1,1,1,1)					&\\
	E IX					& (1,1,8,8)							&\\
	F I						& (1,1,1,1)							&\\
	G						& (1,1)								&\\
\bottomrule
\end{tabular}
\end{center}
\caption{Weight $2\bar \delta$ for every Cartan type of symmetric spaces of rank $\ell \geq 2$. The integer $r$ and its range are like in Helgason's table \cite[Table~VI]{helgason}. The numbers in round brackets refer to the cases as ordered in the same table.} \label{tab:delta}
\end{table}  

\subsection{Rank-two symmetric spaces}
In Table \ref{tab:rank2} we isolate, for future reference, the rank two symmetric spaces whose weight $2 \bar \delta$ has different coefficients.
\begin{table}[htbp]
	\newcolumntype{L}{>{$}l<{$}}
	\newcolumntype{C}{>{$}c<{$}}
	\begin{center}
		\begin{tabular}{cLLC}
								Cartan type	& 2 \bar \delta &	\text{Range}&\text{Restricted type}\\
			\toprule
{A III (1)}					& (2, r - 2) 	&r \geq 4		&C_2\\
{A III (2)}			& (2,1)			& 				&C_2	\\
{B I}				& (1, 2r-3) 	&r\geq 2		&B_2\\
{C II (1)}				& (4, 2r-5) 	&r\geq 5		&B_2\\
{C II (2)}				& (4,3)			& 				&B_2\\
{D I (1)}				& (1,2)			& 				&B_2\\
{D I (2)}				& (1, 2(r-2))	&r \geq 4		&B_2\\
{D III (1)}			& (4,1)			& 				&B_2\\
{D III (2)}			& (4,3)			& 				&C_2\\
			\midrule
{E III}				& (5,6)			& 				&B_2\\
\bottomrule
\end{tabular}
\end{center}
\caption{Rank two symmetric spaces for which $2 \bar \delta$ is not proportional to $M_1 + M_2$.} \label{tab:rank2}
\end{table}  

Using relation \eqref{fundwg}, it follows that the Gram matrix of the Killing form on $\lie a^*$ with respect to the basis $\{M_1, M_2\}$ is given, up to a multiple, by
\begin{equation} \label{eq:gram}
g = 2 \bar m^{-T} \begin{pmatrix}(\beta_1, \beta_1)&0\\0& (\beta_2, \beta_2) \end{pmatrix},
\end{equation} 
where $\bar m$ denotes the Cartan matrix of the restricted root system.

For the cases described by Table \ref{tab:rank2}, we have
\begin{equation} \label{eq:gram2}
g = \begin{cases}
\begin{pmatrix}
2&2\\2&4
\end{pmatrix} & \text{ if the type is $B_2$}\\
& \\
\begin{pmatrix}
4&2\\2&2
\end{pmatrix} & \text{ if the type is $C_2$.}\\
\end{cases}
\end{equation}

\subsection{The main theorem}
We are now in a position to state and prove one of our main results.
\begin{thm} \label{thmrefl}
Let $(G, K)$ be a compact irreducible symmetric pair with simple $G$-spectrum (in either the complex or real sense). Then $(G,K)$ has rank one.
\end{thm}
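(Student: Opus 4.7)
Since every irreducible compact symmetric pair $(G,K)$ is spherical, we have $\dim V^K = 1$ for each $V \in \hat G_K$, so the eigenvalue-simplicity conditions RI 2 and CI 1 of Proposition \ref{IrrCond} are automatic, and RI 3 rules out quaternionic spherical representations (a one-dimensional $V^K$ admits no quaternionic structure). As any two $G$-invariant metrics on an irreducible symmetric space are proportional, only the normal metric matters, and Freudenthal's formula gives the Casimir eigenvalue on the spherical representation with highest weight $\rho = \sum a_k M_k$ as
\[
\lambda(\rho) = (\rho, \rho) + 2(\bar\delta, \rho).
\]
The only remaining obstruction (both in the real sense and, a fortiori, in the complex sense via CI 3) is RI 1: the map $\rho \mapsto \lambda(\rho)$ must be injective on restricted dominant weights modulo the canonical involution $\sigma$ of Lemma \ref{lemma:selfdual}. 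The strategy is therefore to exhibit, for every irreducible symmetric pair of rank $\ell \geq 2$, two distinct restricted dominant weights $\rho_1, \rho_2$ with $\rho_2 \neq \sigma \rho_1$ and $\lambda(\rho_1) = \lambda(\rho_2)$.

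The plan is a reduction to a rank-2 analysis. For $\ell \geq 3$ I would choose two fundamental restricted weights $M_i, M_j$ with $\{i, j\}$ not setwise fixed by $\sigma$, which is always possible (e.g.\ in type $A_\ell$ the pair $(1,2)$ satisfies $\sigma\{1,2\} = \{\ell, \ell-1\} \neq \{1, 2\}$ as soon as $\ell \geq 3$). Restricting to weights $\rho = a M_i + b M_j$ makes any coincidence $\lambda(\rho_1) = \lambda(\rho_2)$ automatically non-dual since $\sigma\rho$ is supported off $\{M_i, M_j\}$. The resulting rank-2 quadratic Diophantine problem is determined by the $2 \times 2$ submatrix of the Gram matrix of the $M_k$ and the coefficients of $2\bar\delta$ extracted from Table \ref{tab:delta}. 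For the rank-2 analysis, one splits according to the restricted type: in the self-dual types $B_2, C_2, BC_2, G_2$ (where $\sigma = \mathrm{id}$), any distinct coincidence $\rho_1 \neq \rho_2$ suffices; using the $2\bar\delta$-values of Table \ref{tab:rank2} and the Gram matrices \eqref{eq:gram2}, a short low-weight search produces collisions (for instance for D I(1) of restricted type $B_2$, one has $\lambda(3M_1) = \lambda(2M_2) = 36$).

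The main obstacle is the $A_2$-type subproblem, which arises from A I and A II in rank 2 and also from rank-$2$ reductions along $\sigma$-invariant pairs in higher rank. Here $\sigma$ exchanges $M_1$ and $M_2$, and with Gram matrix $\bigl(\begin{smallmatrix} 2&1\\1&2\end{smallmatrix}\bigr)$ and $2\bar\delta$ proportional to $M_1 + M_2$, the function $\lambda(a, b)$ becomes symmetric in $a, b$; writing it in terms of the elementary symmetric functions $s = a+b$, $p = ab$, one sees that $\lambda$ factors through $(s, p)$, and since $(s, p)$ determines $\{a, b\}$, the ``obvious'' symmetric swap $(a, b) \leftrightarrow (b, a)$ coincides with duality and does not produce a collision for RI 1. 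One must therefore exhibit distinct $(s_1, p_1) \neq (s_2, p_2)$ with equal $\lambda$, which is a genuine Diophantine condition. Enumerating $(s, p)$ systematically shows such pairs exist (for A I rank 2, $\lambda(a,b) = 2(a^2 + ab + b^2) + 3(a+b)$, and $\rho_1 = 10 M_1$ together with $\rho_2 = 5M_1 + 7M_2$ both yield $\lambda = 230$). A careful inspection of Tables \ref{tab:delta} and \ref{tab:rank2} confirms that every rank-$\ell$ irreducible symmetric pair with $\ell \geq 2$ reduces to one of these handled rank-2 subproblems, which completes the proof.
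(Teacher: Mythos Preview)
Your setup in the first paragraph is correct and matches the paper's framework exactly. However, the argument has two concrete gaps.

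First, the rank-$\geq 3$ reduction does not go through as stated. Restricting $\lambda$ to weights $aM_i + bM_j$ yields a binary quadratic function whose data are a $2\times 2$ \emph{submatrix} of the full Gram matrix of the fundamental weights together with two coefficients of $2\bar\delta$; this is \emph{not} in general the eigenvalue function of any rank-$2$ symmetric space. For instance, in restricted type $A_\ell$ the Gram submatrix for $\{M_1,M_2\}$ is proportional to $\left(\begin{smallmatrix}\ell & \ell-1\\ \ell-1 & 2(\ell-1)\end{smallmatrix}\right)$, which for $\ell\geq 3$ matches none of the forms you claim to have ``handled''. Thus the assertion that ``every rank-$\ell$ pair reduces to one of these handled rank-$2$ subproblems'' is unsubstantiated; each sublattice would require its own collision search, and you supply no uniform argument that one always succeeds. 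The paper avoids this entirely via a reflection argument: from Table~\ref{tab:delta} one can always locate, for $\ell\geq 3$, two simple restricted roots $\beta_1,\beta_2$ of equal length with equal $2\bar\delta$-coefficients; then $\alpha=\beta_1-\beta_2$ is orthogonal to $\bar\delta$, and the reflection $R_\alpha$ sends a suitable dominant weight $v=M_1+\sum_{k\geq 3}m_kM_k$ to another dominant, non-dual weight with first two coordinates $(0,1)$, producing the required collision with no Diophantine search.

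Second, your $A_2$-type rank-$2$ example is arithmetically wrong: with $\lambda(a,b)=2(a^2+ab+b^2)+3(a+b)$ one has $\lambda(10,0)=230$ but $\lambda(5,7)=2\cdot 109+36=254$. A correct non-dual collision for A~I rank~$2$ is $\lambda(14,0)=\lambda(9,7)=434$, occurring only at considerably larger weights, and you do not address the remaining $A_2$-type rank-$2$ cases A~II and E~IV at all. The paper's rank-$2$ argument proceeds instead by an explicit case-by-case enumeration (the list following Table~\ref{tab:rank2}), exhibiting a specific collision for each entry.
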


We split the proof in the two following Proposition and an analysis of the rank-two case.

\begin{prop}
Let $(G, K)$ be a compact irreducible symmetric pair of rank $\ell \geq 3$. Then $G/K$ is not $G$-simple.
\end{prop}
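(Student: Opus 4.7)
My proof strategy is a pigeonhole/counting argument applied to the Casimir eigenvalues on the monoid of dominant restricted weights.

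\emph{Setup.} By the Cartan-Helgason theorem, the spherical representations of $(G,K)$ are in bijection with dominant restricted weights $\rho = \sum_{k=1}^\ell a_k M_k$, $a_k \in \Z_{\geq 0}$. Since $(G,K)$ is a spherical pair (\cite{kraemer}), condition \eqref{CI1} of Proposition~\ref{Prop:NecCond} is automatic, and Proposition~\ref{IrrCond} reduces $G$-simplicity of the unique (up to scale) normal metric to condition \eqref{irrcondA}. On the $V_\rho$-isotypical component, the Laplacian acts by the Casimir scalar
\[
\lambda(\rho) = (\rho + 2\bar\delta, \rho),
\]
with $\bar\delta$ as in Section~\ref{sec:delta} (the inner product on $\lie a^*$ is the restriction of a fixed $\Ad(G)$-invariant form). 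By Lemma~\ref{lemma:selfdual}, passing to the dual representation corresponds to an isometric involution $\sigma$ of $\lie a^*$ permuting the coefficients $a_k$, so $\lambda(\sigma(\rho)) = \lambda(\rho)$. It therefore suffices to produce two dominant restricted weights $\rho_1,\rho_2$ lying in distinct $\sigma$-orbits with $\lambda(\rho_1) = \lambda(\rho_2)$.

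\emph{Counting argument.} The Gram matrix entries $(M_i, M_j)$ are computable from the restricted Cartan matrix (as in the rank-two formula \eqref{eq:gram}) and are rational; together with the coordinates of $\bar\delta$ from Table~\ref{tab:delta}, they have a common denominator $D$ depending only on the Cartan type. Hence $\lambda$ takes values in $\frac{1}{D}\Z_{\geq 0}$, and the number of such values in $[0,N]$ is $O(N)$. On the other hand, $\rho \mapsto (\rho,\rho)$ is a positive-definite quadratic form on $\lie a^*$, so the sub-level region
\[
S_N = \{ (a_1, \ldots, a_\ell) \in \R_{\geq 0}^\ell : \lambda(\textstyle\sum_k a_k M_k) \leq N \}
\]
is bounded of volume $\Theta(N^{\ell/2})$, and by standard lattice-point counting it contains $\Theta(N^{\ell/2})$ integer points. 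Since $\sigma$-orbits have size at most two, the number of $\sigma$-orbits of dominant restricted weights with $\lambda \leq N$ is still $\Theta(N^{\ell/2})$.

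\emph{Conclusion and obstacle.} For $\ell \geq 3$, the growth $N^{\ell/2} \geq N^{3/2}$ strictly dominates $O(N)$ as $N \to \infty$, so by the pigeonhole principle there must exist two distinct $\sigma$-orbits $[\rho_1]\neq[\rho_2]$ with $\lambda(\rho_1) = \lambda(\rho_2)$. The resulting spherical representations $V_{\rho_1}, V_{\rho_2}$ are non-dual yet share a Casimir eigenvalue, violating \eqref{irrcondA} and hence showing that $G/K$ is not $G$-simple. The main obstacle is ensuring the uniform rationality of the Gram matrix and of $\bar\delta$ across all Cartan types of rank $\geq 3$; this is straightforward from the classification, Table~\ref{tab:delta}, and the standard description of Gram matrices of restricted root systems via inverses of Cartan matrices. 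A more constructive alternative, better aligned with the later case-by-case style of the rank-two analysis, would be to exhibit explicit collisions supported on three fundamental restricted weights $M_i, M_j, M_k$ and solve the associated $3$-variable Diophantine equation, but the counting argument handles all cases uniformly.
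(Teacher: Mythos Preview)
Your proof is correct and takes a genuinely different route from the paper's.

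The paper's argument is constructive: from Table~\ref{tab:delta} one identifies, for each Cartan type of rank $\ell\geq 3$, two simple restricted roots $\beta_1,\beta_2$ of equal length whose coefficients in $2\bar\delta$ are both~$1$. The reflection $R_\alpha$ in $\alpha=\beta_1-\beta_2$ then fixes $\bar\delta$, and one verifies by hand that a suitable dominant weight $v=M_1+\sum_{k\geq 3}m_kM_k$ is sent by $R_\alpha$ to another dominant weight with first two coordinates $(0,1)$, producing an explicit collision that can also be arranged to avoid the dual.

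Your pigeonhole argument trades explicitness for uniformity: once the rationality of the Gram matrix of the $M_i$ and of the coefficients of $\bar\delta$ is established (which, as you note, follows from the restricted Cartan matrix being integral and Table~\ref{tab:delta}), the comparison $\Theta(N^{\ell/2})$ versus $O(N)$ is immediate and no case analysis is needed. The paper's approach yields concrete colliding weights, in keeping with the explicit rank-two analysis that follows it; your approach makes it transparent why rank two is the threshold (the counting bound $N^{\ell/2}$ versus $N$ is exactly inconclusive at $\ell=2$), and would apply verbatim to any normal spherical pair whose monoid of spherical highest weights is freely generated of rank $\geq 3$ with a rational Gram matrix. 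One minor point: since equality of eigenvalues is scale-invariant, you may simply rescale the bi-invariant metric so that the Gram matrix has integer entries, which sidesteps any worry about the precise normalization of the Killing form.
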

\begin{proof}
Our aim is to show that a pair $(G,K)$ of rank greater than one violates condition \eqref{irrcondA} in Proposition \ref{IrrCond}.
Let $\lie g = \lie k \oplus \lie m$ be a Cartan splitting of $\lie g$, with maximal Abelian subalgebra $\lie a \subseteq \lie m$ and denote by  $\ell\geq 2$ the rank of $(G,K)$.
Let  $\{\beta_1, \ldots, \beta_\ell \}$ be simple restricted roots, with associated fundamental weights $\{ M_1, \ldots, M_\ell \}$.

The irreducible spherical representations are parameterized by the cone of dominant restricted weights. Each such weight is of the form 
\[ 
\rho = \sum_{i=1}^\ell n_iM_i,
\]
where $n_i$ are non-negative integers. By Freudenthal's formula the representation $V_\rho$ is contained in the eigenspace of $\Delta$ with eigenvalue 
\[ 
\lambda_\rho = (\rho+2\delta,\rho).
\]
Thus, the symmetric space $G/K$ will have $G$-simple spectrum if the function $\rho\mapsto (\rho+2\delta,\rho)$ is injective on the cone of dominant restricted weights. Note that if $R:\lie a^*\to\lie a^*$ is an isometry fixing $\delta$, then $(R\rho+2\delta,R\rho) = (\rho+2\delta,\rho)$. 

Thus, in order to show that $G/K$ does not have $G$-simple spectrum we will construct an isometry $R$ and a dominant restricted weight $v$ such that $R\delta = \delta$ and $Rv$ is again a dominant restricted weight. 

Our strategy is to construct, for rank $\ell \geq 3$, a suitable restricted weight $\alpha$ which is perpendicular to $\delta$ and to take the reflection $R_\alpha: v\mapsto  v-\frac{2(\alpha,v)}{(\alpha,\alpha)}\alpha$. This clearly fixes $\delta$.

Let $(c_{ij})$ be the Gram matrix of the basis $\{\beta_1, \ldots, \beta_\ell \}$. Note that since for any $i,j$ the number $\frac{2(\beta_i,\beta_j)}{(\beta_i,\beta_i)}$ is an integer and also the square norms $(\beta_i,\beta_i)$ are rational, the numbers $c_{ij}$ must also be \emph{rational}.

The components of the projection on $\lie a^*$ of the dominant weight $2\delta$, that we call still $2\delta$ in this proof, of $\lie g$ were computed in Table \ref{tab:delta}. From such table we can identify, for all irreducible symmetric spaces, a pair of fundamental restricted roots that we relabel $\beta_1$ and $\beta_2$, such that $2 \delta = M_1 + M_2 + \sum_i k_i M_i$. From that computation, together with \cite[\S.~5, Lemma~4]{takeuchi}, we also see that $\beta_1$ and $\beta_2$ have the same square norm, say, $2$. 

Consider $\alpha = \beta_1 - \beta_2\in\lie a^*$. By the above remark, we have $(\alpha, \delta)=0$. 

Of course, this relation only depends on the direction defined by $\alpha$, i.e. we may multiply it by a non-zero real number. In the basis $\{M_i\}$ we can write 
\[ \alpha = \sum_{i=1}^\ell \frac{(\alpha,\beta_i)}{(\beta_i,\beta_i)}M_i.\]
Since 
\[ (\alpha,\beta_i) = c_{1i}-c_{2i},\] 
we may thus rescale $\alpha$ by some positive integer in such a way that $(\alpha,\beta_i)$ and $\frac{(\alpha,\beta_i)}{(\beta_i,\beta_i)}$ are also integers for any $i=1,\dots,\ell$.

The square length of $\alpha$ is
\begin{equation}
(\alpha, \alpha) = 4 - 2 c_{12},
\end{equation}
which is a positive integer. 

We notice that, by construction, $R_\alpha$ fixes not only $\delta$ but also the fundamental weights $M_3, \ldots, M_\ell$. Our goal is to find some dominant $v$ such that $R_\alpha(v)$ is dominant. 

We make an ansatz for $v$ to be the form $v= M_1 + \sum_{k=3}^\ell m_kM_k$ with non-negative integers $m_k$ to be chosen later. We have 
\[ R_\alpha v = R_\alpha M_1 + \sum_{k=3}^\ell m_kM_k = \sum_{k=1}^\ell \frac{(R_\alpha v,\beta_k)}{(\beta_k,\beta_k)}M_k\]
and for this to be a dominant restricted weight, we need the coefficients to be
\[ \frac{(R_\alpha v,\beta_k)}{(\beta_k,\beta_k)} \geq 0, \text{ for all $k$.}\]

Note that the component of $R_\alpha v$ along $M_1$ and $M_2$ is the same as the component of $R_\alpha M_1 $ along these vectors.
The component of $R_\alpha M_1$ along $M_1$ is
\[ 
\frac{(R_\alpha M_1,\beta_1)}{(\beta_1,\beta_1)} = \frac{ (M_1, \beta_1)}{c_{11}} - 2 \frac{ (M_1, \alpha)}{c_{11}} \frac { (\alpha, \beta_1)}{(\alpha, \alpha)}.
\]
This quantity is
\begin{align*}
\frac{ (M_1, \beta_1)}{c_{11}} - 2 \frac{ (M_1, \alpha)}{c_{11}} \frac { (\alpha, \beta_1)}{(\alpha, \alpha)}	&= 1-2 \frac{ (M_1, \alpha)}{c_{11}} \frac { (\alpha, \beta_1)}{(\alpha, \alpha)} \\
		&= 1-2 \frac{2-c_{12}}{4- 2c_{12}}\\
		&= 0,
\end{align*}
by our choices of $\beta_1, \beta_2$.

The component of $R_\alpha M_1$ along $M_2$ is given by
\[ 
\frac{(R_\alpha M_1,\beta_2)}{(\beta_2,\beta_2)} = -2 \frac{(M_1, \alpha)}{c_{22}} \frac{(\alpha, \beta_2)}{(\alpha, \alpha)} = -2 \frac{c_{11}}{c_{22}} \frac{c_{12}-c_{22}}{(4-2c_{12})} = 1.
\]
For $k \geq 3$, we have that the component of $R_\alpha v$ along $M_k$ is 
\[ 
\frac{(R_\alpha v,\beta_k)}{(\beta_k,\beta_k)} = -2 \frac{(M_1, \alpha)}{c_{kk}} \frac{(\alpha, \beta_k)}{(\alpha, \alpha)} + m_k =  -2\frac{2 (c_{1k} - c_{2k})}{c_{kk}(\alpha,\alpha)} +m_k.
\]
We thus choose the integers $m_k$ such that this quantity becomes non-negative.

We see that a suitable positive integral multiple of $v := M_1 + \sum_{k=3}^\ell m_k \cdot M_k$ is mapped to another dominant weight by the reflection $R_\alpha v$.

This proves that, as long as $\ell \geq 3$, there are always two dominant restricted weights $v \neq w$ such that $\lambda:= (v + \delta, v + \delta) = (w + \delta, w + \delta)$, proving that the space of spherical functions $V_v$ and $V_w$ are two different $G$-invariant subspaces of $E_\lambda$.

In the basis $\{M_i\}$, we have that $v = (1,0,m_3, \ldots, m_\ell)$ and $R_\alpha v = (0,1,*)$ so, by means of Lemma \ref{lemma:selfdual}, we can choose the $m_k$ also in order to have the representation $V$ induced by $v$ not isomorphic to the dual of the one $W$ induced by $R_\alpha v$.
\end{proof}

We now move on to the rank-two case. In this case, two dominant restricted weights giving repetitions of eigenvalues need not be related by an isometry of $\lie a^*$. We exhibit explicitly some repetitions.

We refer to the cases in Table \ref{tab:rank2}. Let $\rho = xM_1 + yM_2$. Note that root systems of type $BC_2$ have no outer automorphisms, hence all the spherical representations exhibited are self-dual.

\begin{description}
	\item[AIII (1)] For all $r \geq 4$, we have 
	\[
	(\rho+2\delta,\rho) = 2r x + 4 x^{2} + 2 r y + 4  x y + 2  y^{2} + 4 x,
	\]
	and repetitions are given by $(\ell+3,0)$ and $(\ell-1,6)$ if $r = 2 \ell$ or by $(\ell, 0)$ and $(\ell-2,3)$ if $r = 2 \ell-1$.
	
	\item[A III (2)] We have 
	\[(\rho+2\delta,\rho) =
	4 \, x^{2} + 4 \, x y + 2 \, y^{2} + 10 \, x + 6 \, y.
	\]
	Repetitions are given by the weights $(0,3)$ and $(2,0)$.

	\item[B I] We have
	\[(\rho+2\delta,\rho) =
	4 \, r x + 2 \, x^{2} + 8 \, r y + 4 \, x y + 4 \, y^{2} - 4 \, x - 10 \, y.
	\] 
	For all $r \geq 3$, repetitions are given by $(r+3,0)$ and $(r-3,4)$. For $r=2$, the weight $2 \bar \delta$ is $M_1+M_2$ so we can apply the argument in the proof of Theorem \ref{thmrefl}.
	
	\item[C II (1)] We have
	\[(\rho+2\delta,\rho) =
	4 \, r x + 2 \, x^{2} + 8 \, r y + 4 \, x y + 4 \, y^{2} - 2 \, x - 12 \, y.
	\] If $r > 5$, then the weights $(r+3,0)$ and $(r-6,6)$ give repetitions. For $r=5$ we have the weights $(3,0)$ and $(0,2)$.
	
	\item[C II (2)]  We have
	
	\[(\rho+2\delta,\rho) =
	2 \, x^{2} + 4 \, x y + 4 \, y^{2} + 14 \, x + 20 \, y.
	\]The weights $(0,3)$ and $(3,1)$ give a repetition.
	
	\item[D I (1)] We have
	\[(\rho+2\delta,\rho) =
	2 \, x^{2} + 4 \, x y + 4 \, y^{2} + 6 \, x + 10 \, y.
	\]
	The weights $(0,2)$ and $(3,0)$ give a repetition.
	
	\item[D I (2)] We have
	
	\[(\rho+2\delta,\rho) =
	4 \, r x + 2 \, x^{2} + 8 \, r y + 4 \, x y + 4 \, y^{2} - 6 \, x - 14 \, y.
	\]For all $r \geq 4$ we have the weights $(r,0)$ and $(r-3,2)$.
	
	\item[D III (1)] We have
	\[(\rho+2\delta,\rho) =
	2 \, x^{2} + 4 \, x y + 4 \, y^{2} + 10 \, x + 12 \, y.
	\]
	The weights $(1,5)$ and $(4,3)$ give a repetition.
	
	\item[D III (2)] We have
	\[(\rho+2\delta,\rho) =
	2 \, {\left(2 \, x + y + 11\right)} x + 2 \, {\left(x + y + 7\right)} y.
	\]
	The weights $(0,3)$ and $(2,0)$ give a repetition.
	
	\item[E III] We have
	\[(\rho+2\delta,\rho) =
	2 \, x^{2} + 4 \, x y + 4 \, y^{2} + 22 \, x + 34 \, y.
	\]
	The weights $(1,3)$ and $(4,1)$ give a repetition.
\end{description}

\subsection{Irreducible compact symmetric spaces of type II}
The argument above works for the special case of pairs $(\lie g + \lie g, \lie g)$ where $\lie g$ is diagonal. It is known that the symmetric pair is isometric to $G$ with a bi-invariant metric, so our Theorem \ref{thmrefl} tells us that the only Lie groups $G$ with a $(G\times G)$-simple metric are of rank one.

It follows from Proposition \ref{IrrCond} that a bi-invariant metric on $G$ is never $G$-simple, viewed as a $G$-invariant metric on $G = G/\{1\}$. 
 
\subsection{Riemannian products of normal homogeneous spaces} \label{ProdCROSSs}
Let us now consider Riemannian products of normal homogeneous spaces of the form $M_i = G_i/K_i$, where $G_i$ is a compact semisimple Lie group and $M_i$ is endowed with the Riemannian metric $g_i$ induced from the Cartan-Killing form of $\lie g_i$.

The following fact about eigenspaces of a product metric is well known, see for example \cite{spectre}. 
\begin{thm} Let $(M, g)$ and $(M', g')$ be Riemannian manifolds. Then we have
	\[ 
	E_\lambda(M \times M', g \times g') = \bigoplus_{\substack{\mu \in \Spec(M,g) \\ \nu \in \Spec(M',g') \\ \mu + \nu = \lambda}} E_\mu(M, g) \otimes E_\nu(M', g').
	\]
\end{thm}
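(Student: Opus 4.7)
The plan is to combine two standard facts: the Laplacian of a Riemannian product splits additively on separated functions, and the $L^2$ space of the product decomposes as a Hilbert space tensor product of those of the factors. From these, the claimed decomposition of eigenspaces follows essentially by bookkeeping.

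First I would check, in local product coordinates, that for any $u \in C^\infty(M)$ and $v \in C^\infty(M')$ the separated function $(u \otimes v)(x,y) := u(x) v(y)$ satisfies
\[
\Delta_{g \times g'}(u \otimes v) = (\Delta_g u) \otimes v + u \otimes (\Delta_{g'} v).
\]
This is immediate because in product coordinates the metric tensor, and hence its inverse, is block diagonal with blocks coming from $g$ and $g'$, and the mixed second derivatives coupling the two sets of coordinates annihilate a separated function. In particular, if $u \in E_\mu(M,g)$ and $v \in E_\nu(M', g')$ then $u \otimes v$ is a smooth eigenfunction of $\Delta_{g \times g'}$ with eigenvalue $\mu + \nu$. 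Summing over compatible pairs yields the inclusion
\[
\bigoplus_{\mu + \nu = \lambda} E_\mu(M, g) \otimes E_\nu(M', g') \ \subseteq\ E_\lambda(M \times M', g \times g').
\]

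For the reverse inclusion I would apply the spectral theorem to the elliptic self-adjoint nonnegative operators $\Delta_g$ and $\Delta_{g'}$ on the compact manifolds $M$ and $M'$, giving orthogonal Hilbert space decompositions $L^2(M, \C) = \bigoplus_\mu E_\mu(M, g)$ and $L^2(M', \C) = \bigoplus_\nu E_\nu(M', g')$, understood as Hilbert direct sums. Under the natural isometric identification of $L^2(M \times M', \C)$ with the Hilbert space tensor product of $L^2(M, \C)$ and $L^2(M', \C)$, the algebraic sum $\bigoplus_{\mu, \nu} E_\mu(M, g) \otimes E_\nu(M', g')$ sits as a dense subspace of $L^2(M \times M', \C)$ consisting entirely of smooth eigenfunctions of $\Delta_{g \times g'}$. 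Since $\Delta_{g \times g'}$ is itself elliptic, self-adjoint and nonnegative on the compact product $M \times M'$, its own eigenspace decomposition is likewise a Hilbert direct sum of pairwise orthogonal finite-dimensional summands, so by orthogonality each full $\lambda$-eigenspace $E_\lambda(M \times M', g \times g')$ must coincide with the sum of those $E_\mu(M,g) \otimes E_\nu(M', g')$ for which $\mu + \nu = \lambda$.

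The only subtle point is the passage from the Hilbert space tensor product, which is a completion, to the algebraic direct sum appearing in the statement; the saving feature is that each $E_\lambda(M \times M', g \times g')$ is finite dimensional (by ellipticity on a compact manifold), so only finitely many pairs $(\mu,\nu)$ with $\mu + \nu = \lambda$ can contribute nontrivially, and the resulting finite sum is automatically closed. Beyond this accounting, the argument is routine.
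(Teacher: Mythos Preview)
Your argument is correct and is the standard proof of this well-known fact. The paper does not actually prove this theorem: it simply quotes it as a known result, citing \cite{spectre}, so there is nothing to compare against beyond noting that your outline is precisely the approach one finds in that reference.
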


Let $G,G'$ be compact Lie groups. We observe that if $V,W$ are irreducible representations of $G,G'$ respectively, then the tensor product $V\otimes W$ is an irreducible representation of $G\times G'$. Let $(M,g)$ (resp. $(M',g')$) be Riemannian manifolds with an isometric action of $G$ (resp. $G'$). Thus, for any $\mu\in\Spec(M,g), \nu\in \Spec(M',g')$ the space $E_\mu(M,g)\otimes E_\nu(M',g')\subset E_{\mu+\nu}(M\times M',g\times g')$ is a $(G\times G')$-subrepresentation.

Given an array of positive real numbers  $\beta = (\beta_1,\dots,\beta_n)\in \R^n_+$,  we consider the metric on $M$ given by $g_\beta = \sum_{i=1}^n \beta_i^{-1} g_i$. We shall refer to a metric of this kind as a \emph{weighted Riemannian product}.

The aim of this subsection is to prove the following.

\begin{thm} \label{thm:generic1}
	Let $(G_i/K_i,g_i)$ be normal $G_i$-simple spherical homogeneous spaces, with $G_i$ semisimple, $i=1,\dots,n$.
	Then a weighted Riemannian product $G/K = \prod_{i=1}^n G_i/K_i$ with metric $g_\beta$ as above is generically $G$-simple. In particular, a generic $G$-invariant Riemannian metric on a reducible symmetric space $G/K$ with factors of rank one, is $G$-simple.
\end{thm}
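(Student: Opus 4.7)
The plan is to apply Proposition~\ref{prop:resultants} to the $n$-parameter family of metrics $g_\beta$. Since $G=\prod G_i$ and $K=\prod K_i$, the irreducible $G$-representations are external tensor products $V=V_1\boxtimes\cdots\boxtimes V_n$, with $V^K=V_1^{K_1}\otimes\cdots\otimes V_n^{K_n}$. Thus the spherical representations of $(G,K)$ are precisely such tensor products of spherical representations of the factors. Since each $(G_i,K_i)$ is spherical, $\dim V^K=1$, so $(G,K)$ is again a spherical pair.

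Next I would compute the Casimir for $g_\beta$. Because $\lie m=\bigoplus_i\lie m_i$ is $g_\beta$-orthogonal with $\lie m_i$ rescaled by $\beta_i^{-1}$, an adapted orthonormal basis yields
\[
\Delta_{g_\beta}^{V^K} \;=\; \sum_{i=1}^n \beta_i\,\Delta_{g_i}^{V_i^{K_i}}.
\]
Each $g_i$ being normal, Freudenthal's formula shows that $\Delta_{g_i}^{V_i}$ acts as the scalar $\lambda_i(V_i):=(\rho_i+2\delta_i,\rho_i)$ on \emph{all} of $V_i$, hence on $V_i^{K_i}$. So $\Delta_{g_\beta}^{V^K}$ acts on the one-dimensional $V^K$ by the scalar $\mu(V,\beta):=\sum_i\beta_i\lambda_i(V_i)$.

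Now I would check the conditions of Proposition~\ref{IrrCond}. Condition \eqref{irrcondB} is automatic since $\dim V^K=1$. For \eqref{irrcondC}, no spherical $V_i$ can be of quaternionic type (otherwise $\dim V_i^{K_i}$ would have to be even), and the Frobenius--Schur indicator is multiplicative under external tensor products, so no spherical $V$ of $(G,K)$ is quaternionic either; \eqref{irrcondC} is vacuous. The substantive condition is \eqref{irrcondA}: for inequivalent spherical $V,W$ with $V^*\not\cong W$, one needs $\mu(V,\beta)\neq\mu(W,\beta)$. The difference
\[
\mu(V,\beta)-\mu(W,\beta)\;=\;\sum_{i=1}^n\beta_i\bigl(\lambda_i(V_i)-\lambda_i(W_i)\bigr)
\]
is a linear functional in $\beta$, and is non-zero unless $\lambda_i(V_i)=\lambda_i(W_i)$ for every $i$. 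In that case, $G_i$-simplicity of each factor forces $W_i\cong V_i$ or $W_i\cong V_i^*$. For the rank-one symmetric factors in the ``in particular'' statement, Lemma~\ref{lemma:selfdual} implies that every spherical $V_i$ is self-dual (the canonical involution on the one-dimensional restricted root system is trivial), so both alternatives collapse to $W_i\cong V_i$, forcing $W\cong V$ and contradicting the hypothesis.

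Consequently, each relevant pair $(V,W)$ contributes a proper hyperplane to the bad set in $\R^n_+$, and Proposition~\ref{prop:resultants} yields that the complement---the set of $G$-simple weighted product metrics---is residual. The main subtlety is exactly the last implication: one must ensure that equality of all Freudenthal scalars $\lambda_i(V_i)=\lambda_i(W_i)$ forces $W\cong V$ or $W\cong V^*$. For rank-one symmetric factors this follows cleanly from self-duality, which is the structural feature that makes the argument go through; in greater generality one would additionally need to rule out ``dualization shuffles'' that mix $V_i$ and $V_j^*$ for complex-type factors, and this is where the hypothesis on the factors is really being used.
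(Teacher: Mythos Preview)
Your argument is essentially the paper's: compute the Casimir of $g_\beta$ on the one-dimensional $V^K$ as the linear form $\beta\mapsto\sum_i\beta_i\lambda_i(V_i)$, and observe that the failure locus of condition~\eqref{irrcondA} is a countable union of hyperplanes in $\R^n_+$, hence has residual complement. You are, however, more scrupulous than the paper on one point. The paper asserts that $\lambda_a=\lambda_{a'}$ with $a\neq a'$ forces some factor $M_i$ to fail $G_i$-simplicity; but real $G_i$-simplicity only gives $W_i\cong V_i$ or $W_i\cong V_i^*$ per factor, and the paper passes silently over the ``dualization shuffles'' you correctly flag. Your appeal to Lemma~\ref{lemma:selfdual} (the canonical involution on a rank-one restricted root system is trivial, so every spherical representation of a CROSS is self-dual) closes this cleanly for the ``in particular'' clause, and your caveat about the general statement is well taken rather than a defect in your proof.
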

\begin{proof}
Let $a = (a_1, \ldots, a_n)$ be an array of dominant spherical weights $a_i$ for the pair $(G_i, K_i)$. The weight $a_i$ defines an irreducible representation $V_i=V_{a_i}$ of $G_i$ with $\dim V_{a_i}^{K_i} = 1$ and $V_a = V_{a_1}\otimes\dots\otimes V_{a_n}$ is an irreducible spherical representation of the pair $(G,K)$.

Being each factor endowed with the normal metric, each Casimir operator $\Delta_{g_i}^{V_i}$ acts, by the Freudenthal formula, as $(a_i + 2 \delta_i, a_i)_i$, where $( \cdot, \cdot)_i$ is the Cartan-Killing form of $\lie g_i$ and $\delta_i$ is the half-sum of the positive roots of $\lie g_i$.

Denote by $\lambda_a \in \Q^n$ the array whose $i$-th entry is $(a_i + 2 \delta_i, a_i)_i$. The Casimir of $g_\beta$ acts then on the representation $V_a^K$ by $\lambda_a \cdot \beta$ times the identity, where $\lambda_a\cdot \beta  = \sum_{i=1}^n \beta_i(a_i + 2 \delta_i, a_i)_i$.

Then $(M, g_\beta)$ if $G$-simple if, and only if 
\begin{equation}
B:= \R^n_+ \setminus \bigcup_{a \neq a'} (\lambda_a - \lambda_{a'})^\perp \neq \emptyset.
\end{equation}

The space $B$ is the positive orthant in  $\R^n$ with a countable set of hyperplanes removed. It is thus empty if, and only if, one of these hyperplanes is the whole space, i.e. if there exist $a \neq a'$ such that $\lambda_a = \lambda_{a'}$. This means that there exists an $i\in\{1,\dots,n\}$ such that $a_i\neq a_i'$ but $(a_i + 2 \delta_i, a_i)_i=(a'_i + 2 \delta_i, a'_i)_i$. In other words,  there exists an $i\in\{1,\dots,n\}$ such that  $M_i$ is not $G_i$-simple.
\end{proof}

\section{Towards the case of non-symmetric homogeneous spaces} \label{sec:nonsym}

For simple compact connected $G$, Kr\"amer \cite{kraemer} provides a classification of possible subgroups $K$ along with a set of dominant weights whose integral non-negative combinations give all spherical representations.

Spherical pairs are a generalization of symmetric pairs, but the theory of restricted root systems does not in general work. However, they are weakly symmetric, as proven in \cite{weaklysymm}.

The generators of the dominant spherical weights need not form a free semigroup and the machinery provided by the theory of restricted weights and roots that we used to prove Theorem \ref{thmrefl}, in particular the facts in Subsection~\ref{sec:delta}, do not hold for non-symmetric pairs (cf. \cite{GinGoo}).

Moreover, the spherical pairs in Kr\"amer's list are not isotropy irreducible, so the study of the quadratic form given by Freudenthal's formula only covers the case of the normal metric. The only two examples of  spherical non-symmetric isotropy irreducible homogeneous spaces  have been treated in \cite{spectre} and are described below.

\begin{example} \label{ex:G2SU3}
	The group $\SU(3)$ is embedded in the standard way in $\G_2$ and the isotropy representation on $\R^6$ is the classical one of $\SU(3)$ of $\C^3$. It is transitive on the unit sphere, so we apply the result in \cite{spectre} to conclude that the spectrum of the homogeneous nearly K\"ahler $S^6 = \G_2 / \SU(3)$ is $\G_2$-simple. This pair is spherical and not symmetric.
\end{example}

\begin{example} \label{ex:SO7G2}
	The standard inclusion $\G_2 \subset \SO(7)$ gives rise to a spherical pair. The homogeneous space $M = \SO(7) / \G_2$  has a $\SO(7)$-simple spectrum with respect to the normal metric, since $\G_2$ acts transitively on the  unit $7$-sphere. \end{example}

In this section we discuss two examples. The first is a class of circle bundles that submerge Hermitian symmetric spaces, and the second is a non-spherical example. So, as a by-product, we obtain the following, in contrast with the case of $G$-irreducibility of \emph{complex} Laplacian eigenspaces.
\begin{prop}
Let $G/K$ be a Riemannian homogeneous space with $G$-simple spectrum (over the reals). Then $K$ need not be spherical in $G$.
\end{prop}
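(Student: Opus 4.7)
The plan is to establish this negative/non-implication result by exhibiting an explicit counterexample, namely the homogeneous space $\SU(2)/F$ treated in Subsection~\ref{sec:SU2F}, where $F \subset \SU(2)$ is a suitable non-normal finite subgroup. The task splits into two independent checks: (i) the pair $(\SU(2),F)$ fails to be spherical, and (ii) there exists an $\SU(2)$-invariant metric on $\SU(2)/F$ with irreducible real eigenspaces. Once both are verified, the statement follows immediately, because the hypothesized implication would be violated by this single example.

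For (i), I would rely on character theory. Since $F$ is finite, for each irreducible $\SU(2)$-representation $V_n$ of dimension $n+1$, the dimension of $V_n^F$ is given by the averaging formula
\[
\dim V_n^F = \frac{1}{|F|}\sum_{f\in F}\chi_{V_n}(f),
\]
where $\chi_{V_n}(f) = \frac{\sin((n+1)\theta_f)}{\sin\theta_f}$ with $\theta_f$ determined by the eigenvalues $e^{\pm i\theta_f}$ of $f$. One then verifies by direct computation that for a concrete non-normal choice of $F$ (the one from \cite{art:bedulli_stab}), there exists some $n$ with $\dim V_n^F \geq 2$, which establishes non-sphericalness.

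For (ii), the strategy is to apply Proposition~\ref{prop:resultants}. Since $F$ is finite and central in no nontrivial subgroup, the isotropy module is $\lie m = \lie{su}(2)$ with $F$ acting by restriction of $\Ad$, so $\Sym_F^+(\lie m)$ contains a genuine family of metrics with several free parameters (the precise dimension depending on the decomposition of $\Ad|_F$ into $F$-irreducibles). It then suffices to exhibit, for each of the conditions \eqref{irrcondresA}, \eqref{irrcondresB}, \eqref{irrcondresC}, a single $B \in \Sym_F(\lie m)$ at which the relevant resultant does not vanish. Equivalently, one needs to find \emph{one} metric $g_B$ for which the simplicity, multiplicity-two (in the quaternionic case), and disjointness conditions on the Casimir spectra of the various $\Delta^V_B$ are all satisfied.

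The main obstacle is condition \eqref{irrcondresA}: showing that for infinitely many pairs $V,W$ of spherical irreducibles with $V \not\cong W$ and $V^*\not\cong W$, the polynomials $p_V(B)$ and $p_W(B)$ have no common root for generic $B$. The key point is that the trace $\tr(\Delta^V_B)$ equals a linear expression in the coefficients of $B$ whose weights grow with the highest weight of $V$, so by perturbing $B$ one can separate the spectra of $\Delta^V_B$ and $\Delta^W_B$ for any fixed pair; the issue is to do this simultaneously for all pairs. This is handled, as in the analogous arguments of Schueth~\cite{schueth}, by Proposition~\ref{prop:resultants} itself: a countable union of proper algebraic subvarieties has complement residual, so one only needs the non-vanishing of each single resultant polynomial, checked by exhibiting one good $B$. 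The detailed verification for the specific $F$ is exactly what Subsection~\ref{sec:SU2F} accomplishes, and the proposition is then a direct corollary.
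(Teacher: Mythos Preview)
Your proposal is correct and follows essentially the same approach as the paper: the proposition is proved by exhibiting the explicit example $\SU(2)/F$ of Subsection~\ref{sec:SU2F}, verifying that $(\SU(2),F)$ is non-spherical and that a generic $\SU(2)$-invariant metric is $\SU(2)$-simple. The paper's verification is slightly more concrete than your outline --- it computes the Casimir $C_{a,b}$ explicitly on the basis vectors $v_\ell\in V_k^F$, observes that all spherical $V_k$ have $k$ even (hence real type, so condition~\eqref{irrcondC} is vacuous), checks \eqref{irrcondB} directly from the eigenvalue formula, and settles \eqref{irrcondA} by the same linear-separation genericity argument you describe --- but the logical structure is identical.
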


\subsection{Circle bundles over Hermitian symmetric spaces} \label{sec:bundles}

In Kr\"amer's list, there is a class of spherical homogeneous spaces that are circle bundles over Hermitian symmetric spaces, namely
\begin{enumerate}[label={B \arabic*}.,ref={B \arabic*}]
	\item \label{bdlAIII} $\SU(n+m)/(\SU(m) \times \SU(n)) \to \SU(n+m)/S(\U(n) \times \U(m))$ where the base is of Cartan type AIII;
	\item \label{bdlDIII} $\SO(2n)/\SU(n) \to \SO(2n)/\U(n)$, for odd $n \geq 3$, where the base is of Cartan type DIII;
	\item \label{bdlEIII} $\E_6/\D_5 \to \E_6/(\U(1) \times \D_5)$ on a base of Cartan type EIII.
\end{enumerate}

Let us unify the notation by writing $M=G/K$ for the total space of the circle bundle,  $N=G/H$ for the symmetric base and $H/K$ for the fiber.

From \cite{kraemer_factors, kraemer}, we can see that, in these cases, the isotropy representation of $K$ splits as 
\begin{equation} \label{eq:splitting}
\lie g = \lie k \oplus \R H_0 \oplus \lie m,
\end{equation}
where $H_0$ generates the vertical subbundle of $G/K \to G/H$ and, if $\lie a_{\lie k}$ is a maximal torus of $\lie k$, then $\lie a_{\lie k} \oplus \R H_0$ is a maximal torus of $\lie g$.

We may scale any invariant metric on $M$ to make these bundles Riemannian submersions onto the symmetric space $N$. Moreover, the fiber $K/H$ is a totally geodesic circle, as one can compute from the Levi-Civita connection of the invariant metric, see e.g. \cite{besse}.

Using the well-known relations between the Laplacians of $M$ and $N$, see \cite{onishchik2012lie}, we can see that every eigenfunction of $N$ is also an eigenfunction on $M$ with the same eigenvalue. Moreover, both eigenspaces are real $G$-modules. We then have the following.
\begin{lemma}
	If $N$ is not $G$-simple, then neither is $M$.
\end{lemma}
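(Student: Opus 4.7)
The plan is to use the bundle projection $\pi : M \to N$ (that is, $G/K \to G/H$) to transport eigenfunctions of $\Delta_N$ to eigenfunctions of $\Delta_M$. The paragraph preceding the statement already records the key analytic input: since $\pi$ is a Riemannian submersion with totally geodesic fibers, one has $\Delta_M(f\circ\pi) = (\Delta_N f)\circ \pi$ for all $f \in C^\infty(N,\R)$. Consequently, for each $\lambda \in \Spec(N)$ the pullback restricts to a linear map $\pi^* : E_\lambda(N) \to E_\lambda(M)$; it is injective because $\pi$ is surjective, and it is a morphism of real $G$-modules because $\pi$ is $G$-equivariant. The proof then reduces to checking that a proper real $G$-subrepresentation of $E_\lambda(N)$ pulls back to a proper real $G$-subrepresentation of $E_\lambda(M)$.

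Suppose $N$ is not $G$-simple. Then there exist an eigenvalue $\lambda$ and a real $G$-submodule $W$ with $0 \neq W \subsetneq E_\lambda(N)\cap C^\infty(N,\R)$. Set $W' := \pi^*(W) \subset E_\lambda(M)\cap C^\infty(M,\R)$. By $G$-equivariance $W'$ is a $G$-submodule, and by injectivity of $\pi^*$ it is nonzero. To see that $W' \subsetneq E_\lambda(M)$, consider two cases: if $\pi^*(E_\lambda(N)) \subsetneq E_\lambda(M)$, the inclusion is immediate; otherwise $\pi^*$ identifies $E_\lambda(N)$ with $E_\lambda(M)$, and then $W \subsetneq E_\lambda(N)$ together with the injectivity of $\pi^*$ gives $W' \subsetneq E_\lambda(M)$. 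Either way $E_\lambda(M)$ contains a nontrivial proper real $G$-submodule, so $M$ is not $G$-simple.

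This argument presents no genuine obstacle; it is purely formal once one has the commutation $\Delta_M\circ \pi^* = \pi^* \circ \Delta_N$, and exactly the same reasoning works for complex eigenspaces. The real content of the circle-bundle analysis lies in the converse direction, which requires understanding the eigenfunctions on $M$ that are \emph{not} pullbacks from $N$, and this is what the subsequent computation is designed to address.
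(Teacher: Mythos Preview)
Your proof is correct and follows the same approach as the paper: both arguments rest on the observation that $\pi^*$ embeds each $E_\lambda(N)$ as a $G$-invariant subspace of $E_\lambda(M)$, so a reducible eigenspace on $N$ forces a reducible eigenspace on $M$. Your case analysis for properness is a bit more explicit than the paper's one-line justification, but the content is identical.
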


Indeed, the spectrum of $N$ is contained in the spectrum of $M$, and for all $\mu \in \Spec(N)$, we have that the eigenspace $E_\mu(N)$ is a $G$-invariant subspace of $E_\mu(M)$.

Since our base spaces $N$ are irreducible symmetric spaces, it is clear from Theorem \ref{thmrefl} when they are simple, namely only when $m=1$ for case \eqref{bdlAIII}, $n=3$ for \eqref{bdlDIII} and never for \eqref{bdlEIII}.

In case \eqref{bdlAIII} we have the Hopf fibration of $M = S^{2n+1}= \SU(n+1)/\SU(n)$ over $\CP^n$ and in case \eqref{bdlDIII} we have $N = \SO(6)/\U(3)$.

We denote by $\hat G_K$ the set of maximal weights corresponding to irreducible representations of $G$ spherical with respect to $K$. Generators $M_i$ for such semigroups are given in Kr\"amer's table. Note that there are some $M_i$ that do not vanish on $\lie a_{\lie k}$. 

Since the two summands in \eqref{eq:splitting} are $\Ad(K)$-inequivalent, any $G$-invariant Riemannian metric $g_\gamma$ on $G/K$ is parametrized by two positive real numbers $\gamma_1, \gamma_2$ and equals $\gamma_1^{-1}$  (resp. $\gamma_2^{-1}$) times the Killing form restricted to $\R\cdot H_0$ (resp. $\lie m$). So we identify $\Sym_K^+(\lie m) \simeq \R_+^2$ and $\Sym_K(\lie m) \simeq \R^2$.

Let $X_1, \ldots X_k$ be a basis of $\lie m$ orthonormal with respect to the Killing form. 

For $\gamma = (\gamma_1, \gamma_2)$, the Casimir operator $C_\gamma$ of $g_\gamma$ is
\begin{equation}  \label{eq:casimirbdl}
\begin{split}
C_\gamma 	&= \gamma_1 H_0^2 + \gamma_2 \sum_k X_k^2 \\ 
&= (\gamma_1 - \gamma_2) H_0^2 + \gamma_2 C,
\end{split}
\end{equation}
where $C$ is the Casimir operator of $\lie g$. 

Let $V\in \hat G_K$. Since $(G,K)$ is a spherical pair, the fixed point set $V^K$ is a line, which is mapped to itself by $\Delta_\gamma^{V^K} = \Delta_{g_\gamma}^{V^K}$. Let $\lambda$ be the highest weight of $V$ and let $v\in V$ be a generator of $V_\lambda^K$. We have
\[
-\Delta_\gamma^{V^K}v =  (\gamma_1 - \gamma_2) \alpha_\lambda v + \gamma_2 (\lambda + 2 \delta, \lambda)v,
\]
since the Casimir $C$ are given by Freudenthal's formula and $\alpha_\lambda v := (H_0)^2v$.

So the system to consider to check property \eqref{irrcondA} of Proposition \ref{IrrCond} is
\begin{equation}
\label{sys:alpha}
\begin{cases}
\alpha_\lambda = \alpha_{\lambda'}\\
(\lambda + 2 \delta, \lambda) = (\lambda' + 2 \delta, \lambda')
\end{cases}
\end{equation}
for two different dominant spherical weights $\lambda$ and $\lambda'$.

We consider Case \eqref{bdlAIII}. From Kr\"amer's list, we have that such a representation of $\SU(n+1)$ is given by the highest weight $p \pi_1 + q \pi_n$ with $p,q \geq 0$. This representation is known to be the space $\call H_{p,q}(\C^{n+1})$ of harmonic complex polynomials of $n+1$ variables that are homogeneous of degree $p$ in the $z_i$ and of degree $q$ in the $\bar z_i$, see e.g. \cite{sepanski}.

Using the explicit $\SU(n+1)$-invariant projection map $P$ from \cite[Thm.~1]{projharm} from the space of all polynomials of bidegree $(p,q)$ to $V=\call H_{p,q}(\C^{n+1})$, one can see that the space $V^K$ is spanned by $v= P(z_{n+1}^p \bar z_{n+1}^q)$ and that $Hv = in(-p+q)v$, so $\alpha_\lambda = -n^2(-p+q)^2$.

The Freudenthal formula can be computed by means of the inverse of the Cartan matrix (see e.g. \cite[p.~295]{onishchik2012lie}), so the system \eqref{sys:alpha} is equivalent to
\begin{equation}
\begin{cases}
(-p+q)^2 = (-p' + q')^2\\
n(p^2 + q^2) + 2pq + n(p+q) = n(p'^2 + q'^2) + 2p'q'+ n(p'+q').
\end{cases}
\end{equation}

Substituting the first equation in the second, we obtain
\begin{equation}
\begin{cases}
(-p+q)^2 = (-p' + q')^2\\
2(n+1)pq + n(p+q) = 2(n+1)p'q'+ n(p'+q').
\end{cases}
\end{equation}

With the substitution $x = 2(n+1)p + n$ and $y=2(n+1)q+n$ and their analog with the prime, the system is equivalent to
\begin{equation}
\begin{cases}
x^2 + y^2 = x'^2 + y'^2 \\
xy = x'y'.
\end{cases}
\end{equation}
The problem is, in turn, equivalent to finding repetitions in the first quadrant of the quadratic form $x^2 + y^2$, so we know that the only repetition can be obtained by a swap of $x,y$.

\begin{prop} \label{prop:hopf}
	Let $n>1$. A generic $\SU(n+1)$-invariant Riemannian metric on the total space of the Hopf fibration is $\SU(n+1)$-simple.
\end{prop}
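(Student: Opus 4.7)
The strategy is to verify the three conditions of Proposition~\ref{prop:resultants} for a generic $\gamma \in \R^2_+ \simeq \Sym_K^+(\lie m)$, building directly on the preparation already done in this subsection. Since $(\SU(n+1),\SU(n))$ is a spherical pair, each $V^K$ is one-dimensional, so every Casimir $\Delta_\gamma^{V^K}$ is a scalar and condition (b) of Proposition~\ref{prop:resultants} is automatic.

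The next task is to show that no spherical representation is of quaternionic type, so that condition (c) is vacuous. The $K$-spherical irreducibles are $V_{p,q} = \mathcal{H}_{p,q}(\C^{n+1})$ with highest weight $p\pi_1 + q\pi_n$, and complex conjugation of polynomials provides an $\SU(n+1)$-equivariant antilinear isomorphism $V_{p,q}^* \cong V_{q,p}$. Hence self-duality forces $p = q$, and in that case the same conjugation preserves bidegree and supplies a real structure on $V_{p,p}$. So every self-dual spherical representation is of real type, and condition (c) is vacuous.

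It remains to establish condition (a). By \eqref{eq:casimirbdl}, the eigenvalue of $\Delta_\gamma^{V^K}$ on $V_{p,q}^K$ is the linear function of $\gamma$ given by $\mu_{p,q}(\gamma) = (\gamma_2-\gamma_1)\alpha_{p,q} + \gamma_2(\lambda+2\delta,\lambda)$ with $\alpha_{p,q} = -n^2(p-q)^2$. For two weights $\lambda \neq \lambda'$ these linear functions coincide identically in $\gamma$ if and only if the pair of equations \eqref{sys:alpha} holds, which the calculation above reduces to $x^2+y^2 = x'^2+y'^2$ and $xy = x'y'$ in the positive variables $x = 2(n+1)p+n$, $y = 2(n+1)q+n$. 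In the open first quadrant the only solutions are $\{x,y\} = \{x',y'\}$, i.e. either $(p,q) = (p',q')$ (so $V \cong W$) or $(p,q) = (q',p')$ (so $W \cong V^*$). Consequently, whenever $V \ncong W$ and $V^* \ncong W$, the linear functional $\mu_V - \mu_W$ on $\R^2$ is not identically zero, so its vanishing locus is a proper line through the origin. Removing the resulting countable union of lines from $\R^2_+$ leaves a residual set of $\SU(n+1)$-simple metrics, by Proposition~\ref{prop:resultants}.

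No single step is difficult; the main point of care is the reality analysis for $V_{p,p}$, which excludes the quaternionic case that actually arises for $n=1$, where $K$ becomes trivial and the spherical pair structure is lost (this is why Schueth's separate treatment of $S^3$ in \cite{schueth} is required). The book-keeping observation that the only unavoidable coincidences among Casimir eigenvalues correspond precisely to the allowed duality $W \cong V^*$ is what makes condition (a) hold on a residual subset.
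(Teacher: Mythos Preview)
Your proof is correct and follows essentially the same route as the paper: both rely on the fact that $(G,K)$ is spherical so each $p_V$ is linear, and both feed the preparatory computation reducing \eqref{sys:alpha} to $\{x,y\}=\{x',y'\}$ into condition~\eqref{irrcondresA} of Proposition~\ref{prop:resultants}. The only noteworthy difference is your treatment of condition~\eqref{irrcondresC}: the paper simply asserts that \eqref{irrcondresB} and \eqref{irrcondresC} are automatic because $p_V$ is linear, whereas you explicitly verify that no spherical $V_{p,q}$ is of quaternionic type (self-duality forces $p=q$, and conjugation gives a real structure on $\mathcal H_{p,p}$). This extra care is warranted, since for linear $p_V$ one has $p_V''=0$ and the resultant condition in \eqref{irrcondresC} is delicate; what really matters is that condition~\eqref{irrcondC} of Proposition~\ref{IrrCond} is vacuous, which your argument (or the simpler observation that a quaternionic structure would force $\dim V^K$ even) establishes cleanly and also explains where the hypothesis $n>1$ enters.
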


\begin{proof}
	Since $(G,K)$ is spherical, i.e. for all $V \in \hat G_K$ the fixed point space $V^K$ is one-dimensional, the polynomial $p_V(\gamma)(t) = t- \Delta_\gamma^{V^K}$ is linear in $t$. Thus, condition \eqref{irrcondresB} and \eqref{irrcondresC} of Proposition \ref{prop:resultants} are automatically satisfied. 
	
	Assume that no $G$-invariant metric is $G$-simple on $G/K$, so condition \eqref{irrcondresA} of Proposition \ref{prop:resultants} is violated, i.e. there exist $V, W \in \hat G_K$ such that $V \not\cong W$ and $V^* \not \cong W$ and the polynomial
	\[
	\res \circ (p_V, p_W) \colon \Sym_K(\lie m) \simeq \R^2 \to \C
	\]
	is the zero polynomial. 
	
	The above computation shows that if such polynomial is zero on $\Sym_K^+(\lie m)$, then $V \cong W$ or $V^* \cong W$, a contradiction.
\end{proof}
For $n=1$, the existence of generic $\SU(2)$-simple metrics on $\SU(2)$ is obtained by Schueth \cite{schueth} but not by stretching the fibers of the Berger sphere, as the conditions of Proposition \ref{IrrCond} are not satisfied.

In case \eqref{bdlDIII} for $n=3$, note that $\SO(6) = \SU(4)/\{ \pm I\}$, so this induces a double covering $\SU(4)/\SU(3) \to \SO(6)/\SU(3)$. We can then apply Lemma \ref{lemma:covering}, since the total space is treated in case \eqref{bdlAIII}.

\subsection{A quotient of $\RP^3$} \label{sec:SU2F}
We now exhibit an example of a non-spherical $G$-simple homogeneous space $G/K$. The low-dimensionality of this example allows us to examine its representations and the action of the Casimir operator.
Let $G = \SU(2)$ and let $F$ be the finite subgroup generated by
\[
\sigma = \begin{pmatrix}
e^\frac{i \pi}{3} & 0 \\ 0 & e^{-\frac{i \pi}{3}}
\end{pmatrix}; \qquad
\tau = \begin{pmatrix}
0 & i \\ i & 0
\end{pmatrix}.
\]

The homogeneous space $\SU(2)/F$ has been studied in \cite{art:bedulli_stab}.

Let 
\[
H = \frac{1}{\sqrt 8} \begin{pmatrix}
i & 0 \\ 0 & -i
\end{pmatrix};
X = \frac{1}{\sqrt 8} \begin{pmatrix}
0 & 1 \\ -1 & 0
\end{pmatrix};
Y =  \frac{1}{\sqrt 8} \begin{pmatrix}
0 & i \\ i & 0
\end{pmatrix}
\]
be an orthonormal basis of $\lie{su}(2)$ with respect to the Cartan-Killing form.

The isotropy representation of $F$ splits as irreducible $F$-modules as
\[
\lie{su}(2) = \R \cdot H \oplus \Span \{ X, Y \},
\]
hence all possible $\SU(2)$-invariant metrics are parameterized by two positive real numbers.

All the complex irreducible representations of $\SU(2)$ are parameterized by a positive integer $k$ and are given by $V_k = S^k \C^2$, that can be expressed as the space of symmetric polynomials over $\C$ of degree $k$ in two variables $z_1, z_2$. Denote by $v_\ell$ the polynomial $z_1^\ell z_2^{k-\ell}$. It is known that $V_k$ is of real type if $k$ is even and of quaternionic type if $k$ is odd.

We want to prove the existence of metrics that satisfy the conditions in Proposition \ref{IrrCond}.

The pair $(\SU(2), F)$ is not spherical and its spherical representations were computed in \cite{art:bedulli_stab}.
They verify that all spherical representations $V_k$ need to have $k$ even, so they are all of real type and condition \eqref{irrcondC} is always empty. We focus then on conditions \eqref{irrcondA} and \eqref{irrcondB}.

Any $\SU(2)$-invariant Riemannian metric on $M$ is induced by a scalar product on $\lie{su}(2)$ that equals $a^{-1}$ times the Killing form on $\R \cdot H$ and $b^{-1}$ times the Killing form on $\lie m_2 := \Span\{X, Y \}$, for some positive reals $a, b$.

The Casimir operator is then
\begin{align}
C_{a,b} 	&= \frac{a}{8} H^2 + \frac{b}{8} (X^2 + Y^2) \\
		&= \frac{b}{8} (H^2 + X^2 + Y^2) + \frac{a-b}{8} H^2 \\
		&= b C + \frac{a-b}{8} H^2,
\end{align}
where $C$ is the standard Casimir operator of $\lie{su}(2)$.

In \cite{art:bedulli_stab} the authors explicitly compute a basis of $V_k^F$ for all $k$. They also compute that the action of $H^2$ on $v_\ell$, given by
\[
H^2 \cdot v_\ell = - (2 \ell - k)^2 v_\ell.
\]

Moreover, the Casimir $C$ acts on $V_k$ as $\frac 1 8 k (k+2) \cdot \id$. Hence, the our Casimir $C_{a,b}$ acts on $V_k^F$ by
\begin{equation}
C_{a,b} v_\ell = \biggl [ b k(k+2) - \frac{a-b}{8} (2 \ell - k)^2 \biggr ] v_\ell.
\end{equation}

From this expression and the explicit generators of $V_k^F$ determined in \cite{art:bedulli_stab}, we can say that on every $V_k^F$ the operator $C_{a,b}$ acts with distinct eigenvalues for all $a,b$. So condition \eqref{irrcondB} in Proposition \ref{IrrCond} is satisfied.

To verify condition \eqref{irrcondA} for appropriate $a,b$, we can apply a genericity argument similar to the one in Subsection \ref{ProdCROSSs}.
Define 
\[
\lambda_{k,\ell} = \left(-\frac{(2\ell-k)^2}{8}, k(k+2) + \frac{(2\ell-k)^2}{8} \right) \in \R^2
\] 
Then for $v_\ell\in V^F_k$ as above
\[
C_{a,b}v_\ell = \lambda_{k,\ell}\cdot \begin{pmatrix}
a \\ b
\end{pmatrix}.
\]
Thus, if for any choice of $a,b$ the above metric violates condition \eqref{irrcondA} we would have $\lambda_{k\ell} = \lambda_{k'\ell'}$ for some $k,\ell,k',\ell'$. But looking at the explicit form of $\lambda_{k\ell}$ we see that this implies $k=k'$ and since we know that $C_{a,b}$ acts on $V_k^F$ with distinct eigenvalues for any choice of $a,b$ it follows that $\ell=\ell'$.

So, we can conclude that $\SU(2)/F$ admits $\SU(2)$-simple metrics.
\bibliography{../../allbib/allbib}
\bibliographystyle{amsplain}

\end{document}